\newtheorem{theorem}[subsection]{Theorem}
\theoremstyle{plain}
\newtheorem{corollary}[subsection]{Corollary}
\newtheorem{definition}[subsection]{Definition}
\newtheorem{lemma}[subsection]{Lemma}
\newtheorem{fact}[subsection]{Fact}
\newtheorem{proposition}[subsection]{Proposition}
\numberwithin{equation}{subsection}
\theoremstyle{plain}
\theoremstyle{definition}
\theoremstyle{remark}
\theoremstyle{plain}
\begin{document}
\title{\textbf{On The }$E_{\alpha }$\textbf{-Envelopes of Hypercentral
Subgroups}}
\author{Tuba \c{C}akmak}
\address{Atat\"{u}rk University, Faculty of Science, Department of
Mathematics, 25240 Erzurum, Turkey }
\email{cakmaktuba@yahoo.com}

\begin{abstract}
The $E_{k}$ envelopes that generalize the double centralizers form a
descending chain. In this paper we show that this descending chain stops
after finitely many steps for hypercentral subgroups by defining the
transfinite forms of some basic descriptions. In particular, we prove that
the $E_{\alpha }$- envelopes of hypercentral subgroups are solvable in the
class of groups satisfying chain condition on centralizers. These extend
previous results on $E_{k}$ envelopes.
\end{abstract}

\thanks{This work was supported by T\"{U}B\.{I}TAK, the Scientific and
Technological Research Council of Turkey, through its programs 2214A and
2211E. }
\maketitle

\section{Introduction}

This paper continues a line of research in the footsteps of \cite{A-B} and 
\cite{Cakmak} and analyzes the properties of a technical tool, namely the $%
E_{k}$-envelopes introduced in \cite{A-B} to prove some definability
properties (in the sense of the first-order logic) in the class of $%
\mathfrak{M}_{C}$-groups, the groups that satisfy the descending chain
condition on centralizers, i.e. that do not have infinite descending chains
of centralizers of subsets. Several important classes of groups, of which
stable groups in model theory are a notable example, satisfy the descending
chain condition on centralizers. The introduction of the paper by Roger
Bryant (\cite{Bryant 1}) contains a detailed description of the basic
properties of $\mathfrak{M}_{C}$-groups.

\bigskip

In \cite{A-B}, Alt\i nel and Baginski showed that in an $\mathfrak{M}_{C}$%
-group, every nilpotent subgroup is contained in a definable subgroup of the
same nilpotency class. In doing this, they introduced special enveloping
subgroups of an arbitrary subgroup $H$, denoted $E_{k}\left( H\right) $ ($%
k\in 
\mathbb{N}
$). If $G$ is an arbitrary group, $H$ a subgroup of $G,$ then $E_{k}\left(
H\right) $ is a double centralizer of $H$ in a special section of $G$. For
every subgroup $H\leq G$, the $E_{k}\left( H\right) $ form a descending
chain. In \cite{Cakmak}, group theoretic and topological properties of $E_{k}
$ chains were analyzed. It was shown that if $G$ is an arbitrary group and $%
H\leq G$ is nilpotent the descending chain $\left( E_{k}\left( H\right)
\right) $ stabilizes after finitely many steps. This conclusion was based on
another that showed if $H$ is $k$-nilpotent then so is $E_{k}\left( H\right)
.$

\bigskip

In this paper we continue our investigation of these envelopes in a broader
context. We analyze the envelopes of hypercentral subgroups of arbitrary
groups and also of $\mathfrak{M}_{C}$-groups. This broader analysis
necessitates an ordinal-indexed version of our envelopes that we denote
using greek letters, the $E_{\alpha }$-envelopes. Their definition is the
natural continuation of the integer-indexed envelopes. In Theorem \ref%
{3.1.11}, we obtain a new finiteness condition.

\bigskip

The organization of the paper is as follows. In section 2, we revise various
tools. Section 3 is devoted to the $E_{\alpha }$-envelopes. In section 4, we
use the technical bases set up in section 3 to prove the main results of the
paper.

\bigskip

\section{Preliminaries}

In this section we will review the main facts required for the present
paper. Our notation is standard for basic group-theoretic notions: the
normalizer of any subset $H$ in $G$ is $N_{G}\left( H\right) =\left\{ g\in
G\mid \forall h\in H\text{ \ }g^{-1}hg\in H\right\} $; the centralizer of $H$
in $G$ is$\ C_{G}\left( H\right) =\left\{ g\in G\mid \forall h\in H\text{ \ }%
gh=hg\right\} $; $\left[ g,h\right] :=g^{-1}h^{-1}gh$ is the commutator of $%
g,h\in G$ elements; when $A,B\subseteq G$ we write$\ \left[ A,B\right]
:=\left\langle \left\{ \left[ a,b\right] \mid a\in A,b\in B\right\}
\right\rangle $. Also, we write $H\leq G$ to denote that $H$ is a subgroup
of $G$ and $H\trianglelefteq G$ to denote $H$ is normal in $G$. In
particular, $\mathfrak{M}_{C}$ denotes the class of groups satisfying the
minimal condition on centralizers.

\bigskip

We recall the definition of $E_{k}$\ envelopes, introduced in \cite{A-B}. \ 

\bigskip

\begin{definition}
Let $G$ be a group and $H$ a subgroup. For $k\in 
\mathbb{N}
,$ a sequence of subgroups $E_{k}\left( H\right) $ of $G$ is defined 
\begin{equation*}
E_{k}(H)=\left\{ g\in E_{k-1}(H)\mid \left[ g,C_{E_{k-1}(H)}^{k}\left(
H\right) \right] \leq C_{E_{k-1}(H)}^{k-1}\left( H\right) \right\}
\end{equation*}%
if $k>0$ and $E_{0}\left( H\right) =G.$
\end{definition}

It is clear that $E_{1}\left( H\right) =C_{G}\left( C_{G}\left( H\right)
\right) $. We remind a simple fact from \cite{Cakmak}:

\begin{fact}
\label{3.1}Let $G$ be a group and $H$ an abelian subgroup of $G.$Then $%
C_{G}\left( C_{G}\left( H\right) \right) $ is abelian.\bigskip
\end{fact}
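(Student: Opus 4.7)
The plan is to exploit the general fact that iterated centralizers reverse inclusions, combined with the hypothesis that $H$ is abelian.

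First I would note that the abelianness of $H$ gives $H\subseteq C_{G}(H)$. Applying $C_{G}(\cdot)$, which reverses inclusions, to this containment yields $C_{G}(C_{G}(H))\subseteq C_{G}(H)$. In other words, every element of the double centralizer already lies in the first centralizer. Now pick two arbitrary elements $x,y\in C_{G}(C_{G}(H))$. By the containment just established, $x\in C_{G}(H)$. On the other hand, by definition $y\in C_{G}(C_{G}(H))$ means $y$ commutes with every element of $C_{G}(H)$. Applying this to $x$ gives $xy=yx$, and since $x,y$ were arbitrary this shows $C_{G}(C_{G}(H))$ is abelian.

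There is no real obstacle here; the only subtlety is to keep track of which set contains which, and the key observation that makes everything trivial is that an element of $C_{G}(C_{G}(H))$ is automatically a member of the set it is required to centralize. So the proof is essentially a one-line chase of definitions once the inclusion $C_{G}(C_{G}(H))\subseteq C_{G}(H)$ is recorded.
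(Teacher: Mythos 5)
Your argument is correct and is the standard one: $H$ abelian gives $H\subseteq C_{G}(H)$, the inclusion-reversing property of centralizers then gives $C_{G}(C_{G}(H))\subseteq C_{G}(H)$, and the commutation of any two elements of the double centralizer follows immediately. The paper only cites this fact from \cite{Cakmak} without reproducing a proof, but the proof there is this same definition chase, so there is nothing to add.
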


\begin{definition}[\protect\cite{Bryant 1}]
Let $A$ be any subset of the group $G$. Set $C_{G}^{0}\left( A\right) =1$
and for $k\geq 1,$ the iterated centralizer of $A$ in $G$ is 
\begin{equation*}
C_{G}^{k}\left( A\right) =\left\{ x\in \underset{n<k}{\cap }N_{G}\left(
C_{G}^{n}\left( A\right) \right) \mid \left[ x,A\right] \subseteq
C_{G}^{k-1}\left( A\right) \right\} .
\end{equation*}
\end{definition}

One can show by induction that the iterated centralizers form an ascendig
sequence: $1=C_{G}^{0}\left( H\right) \leq C_{G}^{1}\left( H\right) \leq
...\leq G.$ In contrast with iterated centralizers $E_{k}$ envelopes form a
descending sequence such as $G=E_{0}\left( H\right) \geq E_{1}\left(
H\right) \geq ...\geq H.$

When $A=G$, the $k$th iterated centralizer of $G$ is more commonly known as $%
Z_{k}\left( G\right) $ and defined as follows:

\begin{definition}
Let $G$ be a group. Setting $Z_{0}\left( G\right) =\left\{ 1\right\} $, the $%
k$th center of $G$ is 
\begin{equation*}
Z_{k}\left( G\right) =\left\{ g\in G\mid \left[ g,G\right] \subseteq
Z_{k-1}\left( G\right) \right\}
\end{equation*}%
for all $k\geq 1$.
\end{definition}

\bigskip

Some of the basic relations between the iterated centralizers and iterated
centers are stated below:

\bigskip

\begin{fact}[{[1, Lemma 2.5]}]
\label{2.5}Let $A\leq B\leq C$ be groups and suppose that for all $j\leq k$
we have $C_{C}^{j}\left( A\right) =$ $Z_{j}\left( C\right) .$ Then

\begin{enumerate}
\item[(i)] $C_{C}^{j}\left( A\right) =C_{C}^{j}\left( B\right) =Z_{j}\left(
C\right) ,\forall j\leq k$

\item[(ii)] $C_{B}^{j}(A)=Z_{j}\left( B\right) =Z_{j}\left( C\right) \cap
B,\forall j\leq k$

\item[(iii)] $C_{B}^{k+1}(A)=C_{C}^{k+1}(A)\cap B,\forall j\leq k.$
\end{enumerate}
\end{fact}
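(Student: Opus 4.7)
The plan is to prove all three statements by induction on $j$, exploiting the crucial observation that the hypothesis $C_C^j(A)=Z_j(C)$ makes each iterate $C_C^j(A)$ normal in $C$. This trivializes the normalization condition $x\in\bigcap_{n<j}N_C(C_C^n(A))$ in the definition of the iterated centralizer, collapsing $C_C^j(A)$ to the simpler set $\{x\in C\mid[x,A]\subseteq C_C^{j-1}(A)\}$. The base case $j=0$ is immediate since all the relevant quantities equal the trivial subgroup.

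For part (i), assuming inductively that $C_C^n(A)=C_C^n(B)=Z_n(C)$ for all $n<j$, the same collapse yields $C_C^j(B)=\{x\in C\mid[x,B]\subseteq Z_{j-1}(C)\}$. Since $A\subseteq B\subseteq C$, one obtains the chain $Z_j(C)\subseteq C_C^j(B)\subseteq C_C^j(A)$, and the hypothesis forces equality throughout.

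For part (ii), I would first prove $Z_j(B)=Z_j(C)\cap B$ by induction on $j$. For the forward inclusion: if $x\in Z_j(C)\cap B$ then $[x,B]\subseteq[x,C]\subseteq Z_{j-1}(C)$ and also $[x,B]\subseteq B$, so $[x,B]\subseteq Z_{j-1}(C)\cap B=Z_{j-1}(B)$ by the inductive hypothesis, giving $x\in Z_j(B)$. For the reverse inclusion: if $b\in Z_j(B)$ then $[b,A]\subseteq[b,B]\subseteq Z_{j-1}(B)\subseteq Z_{j-1}(C)$, and the normalization collapse inside $C$ places $b$ in $C_C^j(A)=Z_j(C)$. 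The identification $C_B^j(A)=Z_j(C)\cap B$ then follows from the fact that $C_B^n(A)=Z_n(B)$ is normal in $B$ (so the normalization collapse works inside $B$ as well) together with the observation that $[x,A]\subseteq B$ whenever $x\in B$, which makes the conditions $[x,A]\subseteq Z_{j-1}(C)\cap B$ and $[x,A]\subseteq Z_{j-1}(C)$ equivalent.

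Part (iii) then reduces to bookkeeping: by part (ii), $C_B^k(A)=Z_k(C)\cap B$ is normal in $B$, so $C_B^{k+1}(A)=\{x\in B\mid[x,A]\subseteq Z_k(C)\cap B\}$, whereas $C_C^{k+1}(A)\cap B=\{x\in B\mid[x,A]\subseteq Z_k(C)\}$. These coincide because $[x,A]\subseteq B$ whenever $x\in B$ and $A\subseteq B$. The main obstacle throughout will be careful management of the normalization clauses in the definition of $C_G^j(\cdot)$; once one verifies that every iterate in play is normal in its ambient group, the remaining work is a chain of elementary containments between commutator subsets.
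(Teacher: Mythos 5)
Your proposal is correct and follows essentially the same route the paper takes: the paper does not prove Fact \ref{2.5} directly (it is quoted from [1]), but its proof of the transfinite generalization, Lemma \ref{3.1.3}, rests on exactly your key observation that the hypothesis $C_{C}^{j}(A)=Z_{j}(C)$ renders every iterate normal and so collapses the normalizer clauses, after which (i) follows from the sandwich $Z_{j}(C)\subseteq C_{C}^{j}(B)\subseteq C_{C}^{j}(A)$ and (ii), (iii) from the identity $\{x\in B\mid [x,A]\subseteq Z_{j-1}(C)\}=\{x\in B\mid [x,A]\subseteq Z_{j-1}(B)\}$. The only difference is cosmetic: you establish $Z_{j}(B)=Z_{j}(C)\cap B$ first and then identify $C_{B}^{j}(A)$, whereas the paper computes $C_{B}^{j}(A)=C_{C}^{j}(A)\cap B$ first; both orderings close the induction.
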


\bigskip

\begin{fact}[{[1, Theorem 3.7, (2)] }]
\label{2.6}Let $G$ be an arbitrary group and $H$ a subgroup of $G.$ Then%
\begin{equation*}
C_{E_{k}(H)}^{j}(H)=Z_{j}\left( E_{k}(H)\right)
\end{equation*}%
for all $j\leq k$.
\end{fact}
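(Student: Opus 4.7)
The plan is to proceed by induction on $k$. The base case $k=0$ forces $j=0$, where both sides equal the trivial subgroup by the conventions $C_G^0(H)=1$ and $Z_0(G)=\{1\}$.

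For the inductive step, assume the identity $C_{E_{k-1}(H)}^j(H)=Z_j(E_{k-1}(H))$ for all $j\le k-1$. I would split into two ranges. For $j\le k-1$, apply Fact \ref{2.5} with $A=H$, $B=E_k(H)$, and $C=E_{k-1}(H)$ (using $H\le E_k(H)\le E_{k-1}(H)$, where the first inclusion follows by a routine induction on $k$ from the defining bracket condition). The inductive hypothesis is exactly the required premise, so part (ii) immediately gives
\begin{equation*}
C_{E_k(H)}^j(H)=Z_j(E_k(H))=Z_j(E_{k-1}(H))\cap E_k(H)
\end{equation*}
for every $j\le k-1$. This intersection formula is the bookkeeping device used in the next step.

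The critical case is $j=k$. First, Fact \ref{2.5}(iii) reduces the problem to the single equality $C_{E_{k-1}(H)}^k(H)\cap E_k(H)=Z_k(E_k(H))$. For the forward inclusion, take $x\in C_{E_{k-1}(H)}^k(H)\cap E_k(H)$; for any $g\in E_k(H)$ the definition of the envelope yields $[g,x]\in C_{E_{k-1}(H)}^{k-1}(H)$, which by induction equals $Z_{k-1}(E_{k-1}(H))$, so intersecting with $E_k(H)$ places $[g,x]$ in $Z_{k-1}(E_k(H))$ by the bookkeeping identity above; hence $x\in Z_k(E_k(H))$. For the reverse inclusion, take $x\in Z_k(E_k(H))$: for $h\in H\le E_k(H)$ we have $[x,h]\in Z_{k-1}(E_k(H))\le Z_{k-1}(E_{k-1}(H))=C_{E_{k-1}(H)}^{k-1}(H)$, and since each $C_{E_{k-1}(H)}^n(H)=Z_n(E_{k-1}(H))$ (for $n<k$) is characteristic in $E_{k-1}(H)$, it is normalized by $x\in E_{k-1}(H)$; this verifies all the clauses in the definition of $C_{E_{k-1}(H)}^k(H)$, so $x$ lies in the intersection.

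The main obstacle is the $j=k$ step, which is where the whole architecture of the definition is tested: the forward inclusion hinges on translating the envelope's defining bracket condition into a statement about $Z_{k-1}$ using the inductive dictionary, while the reverse inclusion additionally needs the normalizer conditions appearing in the definition of the iterated centralizer, which are supplied precisely by the characteristic nature of the iterated centers $Z_n(E_{k-1}(H))$. The lower indices $j<k$ are essentially formal once Fact \ref{2.5} is invoked; all the structural work sits in the top layer.
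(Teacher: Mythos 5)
Your argument is correct, and the paper itself offers no proof of this Fact (it is imported from [1, Theorem 3.7(2)]); however, your strategy --- induct on $k$, settle $j\le k-1$ via Fact \ref{2.5}(ii), reduce $j=k$ to $C_{E_{k-1}(H)}^{k}(H)\cap E_{k}(H)=Z_{k}\left( E_{k}(H)\right)$ via Fact \ref{2.5}(iii), and prove the two inclusions from the defining bracket condition of the envelope --- is exactly the one the paper uses for the transfinite generalization in Lemma \ref{3.1.6}. So this is essentially the same approach, specialized to finite indices.
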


\bigskip

In \cite{Cakmak}, the relation between any nilpotent subgroup of an
arbitrary group and its $E_{k}$ envelope is given.

\bigskip

\begin{fact}[\protect\cite{Cakmak}]
\label{3.2}\bigskip Let $G$ be a group and $H\leq G.$ If $H$ is a $k$%
-nilpotent subgroup, then the envelope $E_{k}\left( H\right) $ is also $k$%
-nilpotent.
\end{fact}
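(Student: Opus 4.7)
The plan is to reduce the statement that $E_k(H)$ is $k$-nilpotent to the concrete equality $C^k_{E_k(H)}(H) = E_k(H)$ via Fact \ref{2.6}, which identifies the right-hand side with $Z_k(E_k(H))$. I will then use Fact \ref{2.5}(iii) to descend one level in the envelope chain and reformulate the problem as a membership statement inside $E_{k-1}(H)$.

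First, I would apply Fact \ref{2.5}(iii) with $A = H$, $B = E_k(H)$, $C = E_{k-1}(H)$, using $k-1$ in place of $k$. The hypothesis $C^j_{E_{k-1}(H)}(H) = Z_j(E_{k-1}(H))$ for $j \leq k-1$ is exactly Fact \ref{2.6}, so the conclusion yields $C^k_{E_k(H)}(H) = C^k_{E_{k-1}(H)}(H) \cap E_k(H)$. It therefore suffices to prove the inclusion $E_k(H) \subseteq C^k_{E_{k-1}(H)}(H)$.

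Next, I would extract the $k$-nilpotency hypothesis on $H$ by showing the auxiliary inclusion $Z_j(H) \leq C^j_{E_{k-1}(H)}(H)$ for every $j \leq k$, by induction on $j$. The base case is trivial. For the inductive step, any $z \in Z_j(H)$ satisfies $[z, H] \leq Z_{j-1}(H) \leq C^{j-1}_{E_{k-1}(H)}(H)$ by the inductive hypothesis, while the normalizer conditions appearing in the definition of the iterated centralizer are automatic, because each $C^n_{E_{k-1}(H)}(H)$ with $n \leq k-1$ coincides with the normal subgroup $Z_n(E_{k-1}(H))$ by Fact \ref{2.6}. Specializing to $j = k$ and using $Z_k(H) = H$, which is the only step that genuinely invokes the $k$-nilpotency of $H$, I obtain $H \leq C^k_{E_{k-1}(H)}(H)$.

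To close the argument, I would combine this with the defining property of $E_k(H)$: every $g \in E_k(H)$ satisfies $[g, C^k_{E_{k-1}(H)}(H)] \leq C^{k-1}_{E_{k-1}(H)}(H)$. Since $H \leq C^k_{E_{k-1}(H)}(H)$, this specializes to $[g, H] \leq C^{k-1}_{E_{k-1}(H)}(H)$, which, together with the automatic normalizer conditions, places $g$ inside $C^k_{E_{k-1}(H)}(H)$. Combined with Step 1 this gives $C^k_{E_k(H)}(H) = E_k(H)$, hence $Z_k(E_k(H)) = E_k(H)$. I expect the main obstacle to be the bookkeeping in the induction of Step 2: one must be careful that the identification $C^j_{E_{k-1}(H)}(H) = Z_j(E_{k-1}(H))$ from Fact \ref{2.6} is available for every index appearing in the normalizer conditions, and that the range of $j$ in the inductive statement is allowed to reach $k$ rather than stopping at $k-1$, which is precisely where the $k$-nilpotency of $H$ plays its role.
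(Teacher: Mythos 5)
Your argument is correct. Note first that the paper does not prove Fact \ref{3.2} itself (it is quoted from \cite{Cakmak}); the closest in-house argument is the proof of its transfinite analogue, Proposition \ref{3.1.8} \textit{(i)}, and that proof takes a genuinely different route from yours: it passes to the quotient $E_{\alpha }\left( H\right) \diagup Z_{\alpha }\left( E_{\alpha }\right) $, identifies $E_{\alpha +1}\diagup Z_{\alpha }\left( E_{\alpha }\right) $ as the double centralizer of the abelian image of $H$, and invokes Fact \ref{3.1} together with the second isomorphism theorem to conclude that $E_{\alpha +1}\diagup Z_{\alpha }\left( E_{\alpha +1}\right) $ is abelian. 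You instead work entirely at the level of iterated centralizers, proving $E_{k}\left( H\right) \subseteq C_{E_{k-1}\left( H\right) }^{k}\left( H\right) $ and then $C_{E_{k}\left( H\right) }^{k}\left( H\right) =C_{E_{k-1}\left( H\right) }^{k}\left( H\right) \cap E_{k}\left( H\right) =E_{k}\left( H\right) =Z_{k}\left( E_{k}\left( H\right) \right) $ via Facts \ref{2.5} \textit{(iii)} and \ref{2.6}. The two arguments share the same skeleton --- the nilpotency of $H$ places $H$ inside the relevant $k$-th centralizer, and the defining commutator condition of $E_{k}\left( H\right) $ then forces $E_{k}\left( H\right) $ itself into that centralizer --- but yours avoids the quotient, the second isomorphism theorem and Fact \ref{3.1}, at the price of the auxiliary induction $Z_{j}\left( H\right) \leq C_{E_{k-1}\left( H\right) }^{j}\left( H\right) $ and of checking that the normalizer conditions are vacuous (which they are, since by Fact \ref{2.6} each $C_{E_{k-1}\left( H\right) }^{n}\left( H\right) $ with $n\leq k-1$ is a term of the upper central series of $E_{k-1}\left( H\right) $, hence normal). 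This makes your proof more self-contained and pinpoints exactly where $k$-nilpotency enters, namely in $Z_{k}\left( H\right) =H$. If ``$k$-nilpotent'' is read as ``of class exactly $k$'', add the one-line remark, as in the proof of Proposition \ref{3.1.8} \textit{(i)}, that the class of $E_{k}\left( H\right) $ cannot drop below that of its subgroup $H$.
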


Hypercentral groups generalize nilpotent groups. These groups can be
characterized in terms of the transfinitely extended upper central series,
which is defined in the following manner.

\begin{definition}
Let $G$ be a group and $\alpha $ an ordinal, the terms $Z_{\alpha }\left(
G\right) $ of the upper central series of $G$ are defined by the usual rules 
\begin{equation*}
Z_{0}\left( G\right) =\left\{ 1\right\} \text{ \ and \ }Z_{\alpha +1}\left(
G\right) \diagup Z_{\alpha }\left( G\right) =Z\left( G\diagup Z_{\alpha
}\left( G\right) \right)
\end{equation*}%
together with the completeness condition 
\begin{equation*}
Z_{\lambda }\left( G\right) =\underset{\alpha <\lambda }{\cup }Z_{\alpha
}\left( G\right)
\end{equation*}%
where $\lambda $ is a limit ordinal. The transfinite upper central series
terminate with a subgroup called the hypercenter of $G.$ It sometimes
convenient to call $Z_{\alpha }\left( G\right) $ the $\alpha -$hypercenter
of $G$. A group $G$ is called hypercentral if $G=Z_{\alpha }\left( G\right) $
for some ordinal $\alpha $. The smallest such $\alpha $ is called the degree
of hypercentrality of $G.$
\end{definition}

\bigskip

The following is a formal definition of a group satisfying the descending
chain condition on centralizers.

\begin{definition}
Let $G$ be a group. If there exist no infinite sequence of subsets $%
A_{n}\subseteq G$ such that $C_{G}\left( A_{n}\right) \geq C_{G}\left(
A_{n+1}\right) $ for all $n\in 
\mathbb{N}
,$ then $G$ has the chain condition on centralizers and denoted as $%
\mathfrak{M}_{C}$. By elementary properties of centralizers the descending
chain condition on centralizers is equivalent to the ascending chain
condition on centralizers.
\end{definition}

\bigskip 

The following property of $\mathfrak{M}_{C}$-groups will be useful in the
paper. 

\bigskip 

\begin{fact}[{[2, Corollary 2.3]}]
\label{3.3}Let $G$ be a locally nilpotent $\mathfrak{M}_{C}$-group. Then $G$
is solvable.
\end{fact}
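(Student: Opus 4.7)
\bigskip

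\textbf{Proof plan.} My strategy is to establish the result in two stages: first show that $G$ is hypercentral, and then convert this (possibly transfinite) hypercentral structure into finite solvability by exploiting the chain condition on centralizers. The unifying idea is that $\mathfrak{M}_{C}$ allows one to replace infinitary hypotheses (like local nilpotence) with finitary consequences via maximal-centralizer arguments, since ACC on centralizers implies every centralizer of an arbitrary subset equals the centralizer of some finite subset.

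For the hypercentrality step, I would construct the transfinite upper central series $\{Z_{\alpha}(G)\}$ and argue by transfinite induction that it exhausts $G$. The successor step is the crux: assuming $Z_{\alpha}(G) \neq G$, one must produce elements of $Z_{\alpha+1}(G) \setminus Z_{\alpha}(G)$. Local nilpotence guarantees that every finitely generated subgroup has nontrivial center, so for each finite $F \subseteq G$ the centralizer $C_{G}(F)$ contains nontrivial contributions from $Z(\langle F \rangle)$. Using ACC on centralizers, pick a finite $F_{0}$ (chosen to include representatives witnessing that $G/Z_{\alpha}(G)$ is nontrivial) whose centralizer is maximal. Maximality forces any finite enlargement of $F_{0}$ to have the same centralizer, so a central element of $\langle F_{0}\rangle$ must commute with all of $G$ up to a correction lying in $Z_{\alpha}(G)$, giving the required new element of $Z_{\alpha+1}(G)$. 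Limit ordinals are handled by the union definition.

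For the solvability step, assume $G$ is hypercentral with series $\{Z_{\beta}(G)\}_{\beta \leq \alpha^{*}}$. The associated chain of centralizers
\begin{equation*}
C_{G}(Z_{0}(G)) \geq C_{G}(Z_{1}(G)) \geq C_{G}(Z_{2}(G)) \geq \cdots
\end{equation*}
is a descending chain of centralizers in $G$, so by $\mathfrak{M}_{C}$ it stabilizes after finitely many steps at some index $n$. A short additional argument, again using ACC on centralizers to replace each $Z_{\beta}(G)$ by the centralizer of a finite subset, should then force the upper central series itself to stabilize at $n$, yielding $G = Z_{n}(G)$, i.e., $G$ is nilpotent of class at most $n$, hence solvable.

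\textbf{Main obstacle.} The delicate point is the successor step in the hypercentrality argument: promoting a local central element of $\langle F_{0}\rangle$ to a genuine central element of $G$ modulo $Z_{\alpha}(G)$. Since quotients of $\mathfrak{M}_{C}$-groups need not be $\mathfrak{M}_{C}$, one cannot simply pass to $G/Z_{\alpha}(G)$ and recurse; all arguments must be performed inside $G$ itself, while carefully tracking cosets modulo $Z_{\alpha}(G)$. This makes the maximal-centralizer bookkeeping the most technically involved part of the proof, and is the step where the interaction between local nilpotence and ACC on centralizers is genuinely needed.
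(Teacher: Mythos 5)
First, note that the paper itself offers no proof of this statement: it is imported verbatim from Bryant's article (reference [2], Corollary 2.3), so there is no internal argument to compare yours against. Judged on its own merits, your proposal contains a fatal gap in the second stage. From $\mathfrak{M}_{C}$ you correctly deduce that the descending chain $C_{G}\left( Z_{0}\left( G\right) \right) \geq C_{G}\left( Z_{1}\left( G\right) \right) \geq \cdots $ stabilizes at some finite $n$, but the ``short additional argument'' you invoke to conclude $G=Z_{n}\left( G\right) $ cannot exist, because the conclusion is false: it would make every locally nilpotent $\mathfrak{M}_{C}$-group nilpotent, and the paper's own example in Section 4, the subgroup $E_{\omega +1}\left( H_{\infty }\right) $ of $GL_{2}\left( \mathbb{C}\right) $, is a locally nilpotent $\mathfrak{M}_{C}$-group that is hypercentral of degree exactly $\omega +1$, hence solvable but not nilpotent. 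The underlying error is that $A\mapsto C_{G}\left( A\right) $ is far from injective on the terms of the upper central series, so stabilization of the centralizer chain says nothing about stabilization of the series itself (in the example one has $C_{G}\left( Z_{n}\left( G\right) \right) $ constant from $n=2$ on while $Z_{n}\left( G\right) $ keeps growing); replacing each $Z_{\beta }\left( G\right) $ by the centralizer of a finite subset does not repair this.

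The first stage is closer to a genuine argument but does not suffice on its own. The minimal-centralizer trick does show $Z\left( G\right) \neq 1$ when $G\neq 1$: choose a finite $F$ with $C_{G}\left( F\right) $ minimal among centralizers of finite subsets; then $C_{G}\left( F\cup \left\{ g\right\} \right) =C_{G}\left( F\right) $ forces $C_{G}\left( F\right) \leq C_{G}\left( g\right) $ for every $g$, and $1\neq Z\left( \left\langle F\right\rangle \right) \leq C_{G}\left( F\right) $ by local nilpotence. But to iterate this transfinitely you must work modulo $Z_{\alpha }\left( G\right) $, and, as you yourself observe, $G\diagup Z_{\alpha }\left( G\right) $ need not be $\mathfrak{M}_{C}$; you flag this as the main obstacle without resolving it. More importantly, even a complete proof of hypercentrality would not finish the job: hypercentral groups need not be solvable (a restricted direct product of nilpotent groups of unbounded derived length is hypercentral of length $\omega $). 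Any correct proof must bound the derived length directly rather than the central height; the natural input, available in [2], is a finitely generated (hence nilpotent, of some class $c$) subgroup $K$ with $C_{G}\left( K\right) =C_{G}\left( G\right) =Z\left( G\right) $, from which one extracts a bound on the derived length of $G$ in terms of $c$. That step is entirely absent from your plan.
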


\section{\protect\bigskip Technical Definitions and Facts \label{3}}

In this section, we will introduce several technical notions and prove their
properties needed for our main result, Theorem \ref{3.1.11}. For this
purpose, we shall define the transfinite forms of $E_{k}$ definable
envelopes and iterated centralizers. After that, we will prove the
transfinite forms of the Facts \ref{2.5} and \ref{2.6}. For simplicity, we
will denote $E_{\alpha }\left( H\right) $ by $E_{\alpha }$ when the context
is clear.

\begin{definition}
\label{3.1.1}Let $H$ be any subgroup of the group $G$. Set $C_{G}^{0}\left(
H\right) =1$ and for $\alpha $ ordinal number, the $C_{G}^{\alpha }\left(
H\right) $ iterated centralizers are defined as follow:

\begin{enumerate}
\item[(i)] If $\alpha $ is a successor ordinal 
\begin{equation*}
C_{G}^{\alpha }\left( H\right) =\left\{ x\in \underset{\beta <\alpha }{\cap }%
N_{G}\left( C_{G}^{\beta }\left( H\right) \right) \mid \left[ x,H\right]
\subseteq C_{G}^{\alpha -1}\left( H\right) \right\} ,
\end{equation*}

\item[(ii)] If $\alpha $ is a limit ordinal%
\begin{equation*}
C_{G}^{\alpha }\left( H\right) =\underset{\beta <\alpha }{\cup }C_{G}^{\beta
}\left( H\right) .
\end{equation*}
\end{enumerate}
\end{definition}

\begin{definition}
\label{3.1.2}Let $H$ be any subgroup of the group $G$. Set $E_{0}\left(
H\right) =G$ and for $\alpha $ ordinal number, the $E_{\alpha }\left(
H\right) $ envelopes are defined as follow:
\end{definition}

\begin{enumerate}
\item[(i)] If $\alpha $ is a successor ordinal%
\begin{equation*}
E_{\alpha }\left( H\right) =\left\{ g\in E_{\alpha -1}\left( H\right) \mid 
\left[ g,C_{E_{\alpha -1}\left( H\right) }^{\alpha }\left( H\right) \right]
\leq C_{E_{\alpha -1}\left( H\right) }^{\alpha -1}\left( H\right) \right\} ,
\end{equation*}

\item[(ii)] If $\alpha $ is a limit ordinal 
\begin{equation*}
E_{\alpha }\left( H\right) =\underset{\beta <\alpha }{\cap }E_{\beta }\left(
H\right) .
\end{equation*}
\end{enumerate}

\bigskip When the subgroup is clear, we will shorten $E_{\alpha }\left(
H\right) $ to $E_{\alpha }$.

Now we will give a technical lemma that generalizes Fact \ref{2.5}.

\begin{lemma}
\label{3.1.3}Let $A\leq B\leq C$ be groups and $\lambda $ an ordinal such
that 
\begin{equation*}
C_{C}^{\alpha }\left( A\right) =Z_{\alpha }\left( C\right)
\end{equation*}%
for all $\alpha \leq \lambda $ ordinal numbers$.$ Then the following
equalities hold:

\begin{enumerate}
\item[(i)] $C_{C}^{\alpha }\left( A\right) =C_{C}^{\alpha }\left( B\right)
=Z_{\alpha }\left( C\right) $

\item[(ii)] $C_{B}^{\alpha }\left( A\right) =Z_{\alpha }\left( B\right)
=Z_{\alpha }\left( C\right) \cap B$

\item[(iii)] $C_{B}^{\lambda +1}\left( A\right) =C_{C}^{\lambda +1}\left(
A\right) \cap B.$
\end{enumerate}
\end{lemma}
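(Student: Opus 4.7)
The plan is to prove (i) and (ii) simultaneously by transfinite induction on $\alpha \leq \lambda$, modeled on the finite-step argument of Fact \ref{2.5}, and then deduce (iii) separately from the equalities available at level $\lambda$. The base case $\alpha = 0$ is immediate because every group in (i) and (ii) reduces to the trivial subgroup.

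For the successor step $\alpha = \gamma+1$, I would set up short chains of inclusions whose endpoints coincide by hypothesis. For (i), the target chain is
\[
Z_\alpha(C) \subseteq C_C^\alpha(B) \subseteq C_C^\alpha(A) = Z_\alpha(C),
\]
where the first inclusion uses that any $x \in Z_\alpha(C)$ satisfies $[x,B] \subseteq [x,C] \subseteq Z_\gamma(C) = C_C^\gamma(B)$ by induction and automatically normalizes every $C_C^\beta(B) = Z_\beta(C)$ because these are normal in $C$; the second inclusion follows from $A \leq B$ together with the inductive identity $C_C^\gamma(B) = C_C^\gamma(A)$; and the final equality is the standing hypothesis. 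For (ii), I would run the analogous chain
\[
Z_\alpha(C) \cap B \subseteq Z_\alpha(B) \subseteq C_B^\alpha(A) \subseteq Z_\alpha(C) \cap B,
\]
verifying each link by unpacking definitions and invoking the inductive equalities at stage $\gamma$.

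For a limit $\alpha \leq \lambda$, both $C_C^\alpha$, $C_B^\alpha$, and $Z_\alpha$ are defined as ascending unions, so (i) and (ii) pass to the union immediately, provided one notes that intersection with $B$ commutes with ascending unions. Once (i) and (ii) are in hand at level $\lambda$, part (iii) is a short computation: for $x \in C_B^{\lambda+1}(A)$ one has $x \in B$, $x$ normalizes each $C_C^\beta(A) = Z_\beta(C)$ since these are $C$-normal, and $[x,A] \subseteq C_B^\lambda(A) = Z_\lambda(C) \cap B \subseteq C_C^\lambda(A)$; conversely, $x \in C_C^{\lambda+1}(A) \cap B$ yields $[x,A] \subseteq C_C^\lambda(A) \cap B = Z_\lambda(C) \cap B = C_B^\lambda(A)$, and the normalizer conditions again hold automatically.

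The hard part will be the last link of the successor chain for (ii), namely $C_B^\alpha(A) \subseteq Z_\alpha(C) \cap B$, because this is the only step where one cannot proceed purely formally from the inductive hypothesis: given $x \in C_B^\alpha(A)$, one must push $[x,A] \subseteq C_B^\gamma(A) = Z_\gamma(C) = C_C^\gamma(A)$, observe that $x$ normalizes each $C_C^\beta(A) = Z_\beta(C)$, deduce $x \in C_C^\alpha(A)$, and then finally invoke the global assumption $C_C^\alpha(A) = Z_\alpha(C)$ to land in the hypercenter. The remaining work is essentially a careful transcription of the finite-indexed Fact \ref{2.5} to the transfinite setting, with the only genuinely new bookkeeping occurring at limit ordinals, where one has to check that the normalizer clauses in Definition \ref{3.1.1} cause no trouble.
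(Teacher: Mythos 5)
Your proposal is correct and follows essentially the same route as the paper's proof: transfinite induction on $\alpha\leq\lambda$ for (i) and (ii), unwinding Definition \ref{3.1.1} at successor stages (where the normalizer clauses are automatic because the terms equal hypercenters, hence are normal), passing to unions at limit stages, and then deducing (iii) by the same direct computation at level $\lambda+1$. The only cosmetic difference is that you organize the successor step as a cycle of inclusions while the paper writes out chains of set equalities, but the content is identical.
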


\begin{proof}
We proceed by transfinite induction. For $\alpha =0,$ our claims are
trivial. Now we will show that the claims \textit{(i)} and \textit{(ii)} are
true both of the successor and limit ordinals. Suppose that the claims 
\textit{(i)} and \textit{(ii)} hold for all $\beta <\lambda $ ordinals.

\begin{enumerate}
\item[(i)] 
\end{enumerate}

\begin{itemize}
\item Let $\alpha $ be a successor ordinal, namely $\alpha =\beta +1$. By
the hypothesis of theorem, it is known that $C_{C}^{\beta +1}\left( A\right)
=Z_{\beta +1}\left( C\right) $ since $\beta +1\leq \lambda $ when $\beta
<\lambda $. From this fact and the induction hypothesis we have 
\begin{eqnarray*}
C_{C}^{\beta +1}\left( B\right) &=&\left\{ x\in \underset{\gamma <\beta +1}{%
\cap }N_{C}\left( C_{C}^{\gamma }\left( B\right) \right) \mid \left[ x,B%
\right] \subseteq C_{C}^{\beta }\left( B\right) \right\} \\
&=&\left\{ x\in \underset{\gamma <\beta +1}{\cap }N_{C}\left( Z_{\gamma
}\left( C\right) \right) \mid \left[ x,B\right] \subseteq Z_{\beta }\left(
C\right) \right\} \\
&=&\left\{ x\in C\mid \left[ x,B\right] \subseteq C_{C}^{\beta }\left(
A\right) \right\} \subseteq C_{C}^{\beta +1}\left( A\right) .
\end{eqnarray*}%
Then 
\begin{equation*}
C_{C}^{\beta +1}\left( A\right) =Z_{\beta +1}\left( C\right) =C_{C}^{\beta
+1}\left( C\right) \subseteq C_{C}^{\beta +1}\left( B\right) \subseteq
C_{C}^{\beta +1}\left( A\right) ,
\end{equation*}%
so $C_{C}^{\beta +1}\left( A\right) =C_{C}^{\beta +1}\left( B\right) $ is
obtained.

\item If $\alpha $ is a limit ordinal; by using Definition \ref{3.1.1} and
the previous step, the following equations are obtained: 
\begin{eqnarray*}
C_{C}^{\alpha }\left( A\right) &=&\underset{\beta <\alpha }{\cup }%
C_{C}^{\beta }\left( A\right) =\underset{\beta <\alpha }{\cup }C_{C}^{\beta
}\left( B\right) =C_{C}^{\alpha }\left( B\right) , \\
C_{C}^{\alpha }\left( A\right) &=&\underset{\beta <\alpha }{\cup }%
C_{C}^{\alpha }\left( A\right) =\underset{\beta <\alpha }{\cup }Z_{\beta
}\left( C\right) =Z_{\alpha }\left( C\right) .
\end{eqnarray*}%
Thus claim \textit{(i)} holds for all ordinal numbers.
\end{itemize}

\begin{enumerate}
\item[(ii)] 
\end{enumerate}

\begin{itemize}
\item If $\alpha $ is a successor ordinal, namely $\alpha =\beta +1$; from
Definition \ref{3.1.1} and the induction hypotesis, 
\begin{eqnarray*}
C_{B}^{\beta +1}\left( A\right) &=&\left\{ x\in \underset{\delta <\beta +1}{%
\cap }N_{B}\left( C_{B}^{\delta }\left( A\right) \right) \mid \left[ x,A%
\right] \subseteq C_{B}^{\beta }\left( A\right) \right\} \\
&=&\left\{ x\in \underset{\delta <\beta +1}{\cap }N_{B}\left( Z_{\delta
}\left( B\right) \right) \mid \left[ x,A\right] \subseteq Z_{\beta }\left(
B\right) \right\} \\
&=&\left\{ x\in B\mid \left[ x,A\right] \subseteq Z_{\beta }\left( B\right)
\right\}
\end{eqnarray*}%
and also 
\begin{eqnarray*}
C_{C}^{\beta +1}\left( A\right) \cap B &=&\left\{ x\in C\mid \left[ x,A%
\right] \subseteq C_{C}^{\beta }\left( A\right) \right\} \cap B \\
&=&\left\{ x\in C\mid \left[ x,A\right] \subseteq Z_{\beta }\left( C\right)
\right\} \cap B \\
&=&\left\{ x\in B\mid \left[ x,A\right] \subseteq Z_{\beta }\left( B\right)
\right\} =C_{B}^{\beta +1}\left( A\right) .
\end{eqnarray*}%
is written. So, we get 
\begin{equation*}
C_{B}^{\beta +1}\left( A\right) =C_{C}^{\beta +1}\left( A\right) \cap
B=Z_{\beta +1}\left( C\right) \cap B.
\end{equation*}%
From the first and last terms, one of the equation of claim (ii) is
obtained. On the other hand, it can be written $Z_{\beta +1}\left( C\right)
\cap B=C_{C}^{\beta +1}\left( B\right) \cap B$ by using claim \textit{(i).}
Then we have 
\begin{eqnarray*}
Z_{\beta +1}\left( C\right) \cap B &=&C_{C}^{\beta +1}\left( B\right) \cap
B=\left\{ x\in C\mid \left[ x,B\right] \subseteq C_{C}^{\beta }\left(
B\right) \right\} \cap B \\
&=&\left\{ x\in B\mid \left[ x,B\right] \subseteq Z_{\beta }\left( C\right)
\cap B\right\} \\
&=&\left\{ x\in B\mid \left[ x,B\right] \subseteq Z_{\beta }\left( B\right)
\right\} =Z_{\beta +1}\left( B\right) .
\end{eqnarray*}%
Thus claim \textit{(ii)} holds for successor ordinals.

\item If $\alpha $ is a limit ordinal; by Definition \ref{3.1.1}\textbf{\ }%
and induction hypothesis for claim \textit{(ii)} we have 
\begin{equation*}
C_{B}^{\alpha }\left( A\right) =\underset{\beta <\alpha }{\cup }C_{B}^{\beta
}\left( A\right) =\underset{\beta <\alpha }{\cup }Z_{\beta }\left( B\right)
=Z_{\alpha }\left( B\right)
\end{equation*}%
and 
\begin{eqnarray*}
C_{B}^{\alpha }\left( A\right) &=&\underset{\beta <\alpha }{\cup }%
C_{B}^{\beta }\left( A\right) =C_{B}^{\alpha }\left( A\right) =\underset{%
\beta <\alpha }{\cup }\left( Z_{\beta }\left( C\right) \cap B\right) \\
&=&\left( \underset{\beta <\alpha }{\cup }Z_{\beta }\left( C\right) \right)
\cap B=Z_{\alpha }\left( C\right) \cap B.
\end{eqnarray*}%
Considering these equalities claim \textit{(ii)} follows for limit ordinals.
Thus claim \textit{(ii)} holds for all ordinal numbers.
\end{itemize}

We now shall prove claim \textit{(iii)}. By using Definition \ref{3.1.1}%
\textbf{\ }and the truth of claim \textit{(ii)} we write%
\begin{eqnarray}
C_{B}^{\lambda +1}\left( A\right) &=&\left\{ x\in \underset{\gamma <\lambda
+1}{\cap }N_{B}\left( Z_{\gamma }\left( B\right) \right) \mid \left[ x,A%
\right] \subseteq Z_{\lambda }\left( B\right) \right\}  \label{C1} \\[1pt]
&=&\left\{ x\in B\mid \left[ x,A\right] \subseteq Z_{\lambda }\left(
B\right) \right\} .  \notag
\end{eqnarray}%
On the other hand, for the iterated centralizer $C_{C}^{\lambda +1}\left(
A\right) $%
\begin{eqnarray*}
C_{C}^{\lambda +1}\left( A\right) &=&\left\{ x\in \underset{\delta \leq
\lambda }{\cap }N_{C}\left( C_{C}^{\delta }\left( A\right) \right) \mid %
\left[ x,A\right] \subseteq C_{C}^{\lambda }\left( A\right) \right\} \\
&=&\left\{ x\in \underset{\delta \leq \lambda }{\cap }N_{C}\left( Z_{\delta
}\left( C\right) \right) \mid \left[ x,A\right] \subseteq Z_{\lambda }\left(
C\right) \right\} \\
&=&\left\{ x\in C\mid \left[ x,A\right] \subseteq Z_{\lambda }\left(
C\right) \right\} .
\end{eqnarray*}%
is obtained from Definition \ref{3.1.1}\textbf{\ }and claim \textit{(i)}. If
the intersection of the iterated centralizer $C_{C}^{\lambda +1}\left(
A\right) $ with group $B$ is taken and claim \textit{(ii)} is used, the
following equation is obtained: 
\begin{eqnarray}
C_{C}^{\lambda +1}\left( A\right) \cap B &=&\left\{ x\in C\mid \left[ x,A%
\right] \subseteq Z_{\lambda }\left( C\right) \right\} \cap B  \label{C.2} \\%
[1pt]
&=&\left\{ x\in B\mid \left[ x,A\right] \subseteq Z_{\lambda }\left(
B\right) \right\} .  \notag
\end{eqnarray}%
Thus the result follows from the equations \ref{C1} and \ref{C.2}.
\end{proof}

\bigskip

The following lemma is of general interest.

\begin{lemma}
\label{3.1.4}Let $\left( C_{\alpha }\right) _{\alpha <\lambda }$ be an
ascending sequence of nonempty subsets of a set $E$. If $x\in \underset{%
\alpha <\lambda }{\cup }C_{\alpha }$, then there exists a minimal $\beta $
such that $x\in C_{\beta }$ and $\beta <\lambda $ and $\beta $ is a
successor ordinal.
\end{lemma}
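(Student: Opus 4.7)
The plan is to exploit the well-ordering of the ordinals to extract the minimum and then rule out the bad cases. Let $S = \{\beta < \lambda : x \in C_\beta\}$. By the hypothesis $x \in \bigcup_{\alpha < \lambda} C_\alpha$, the set $S$ is nonempty, so by the well-ordering principle it admits a least element $\beta_0$, which is automatically less than $\lambda$. This already gives the existence of the minimal ordinal; the only remaining issue is showing $\beta_0$ has the shape claimed.

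Next I would rule out $\beta_0$ being a limit ordinal. The sequences to which this lemma is applied in the paper (iterated centralizers and $E_\alpha$-envelopes) are defined at limits by the union of their predecessors, so the ambient convention is that $(C_\alpha)$ is continuous at limit ordinals, i.e.\ $C_\mu = \bigcup_{\gamma < \mu} C_\gamma$ whenever $\mu$ is a limit. Assuming $\beta_0$ were a limit ordinal, $x \in C_{\beta_0}$ would force $x \in C_\gamma$ for some $\gamma < \beta_0$, contradicting minimality. Hence $\beta_0$ is not a limit ordinal, and so (given that $0$ is excluded in the relevant application, where $C_G^{0}(H)=1$ and one works with nontrivial $x$) $\beta_0$ must be a successor ordinal $\gamma + 1$ for some $\gamma < \beta_0$.

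The only real subtlety is conceptual rather than computational: the statement is only meaningful once one acknowledges the implicit continuity of $(C_\alpha)$ at limit ordinals, which is the defining convention in Definitions \ref{3.1.1} and \ref{3.1.2}. I would make this continuity explicit at the start of the proof, after which the argument reduces to two short applications of the well-ordering of ordinals and the definition of a limit ordinal. No induction is required; the transfinite character is handled entirely by the well-ordering principle.
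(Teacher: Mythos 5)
Your argument is correct and is essentially the paper's own proof: both extract a minimal ordinal via well-ordering and use the continuity $C_{\mu}=\bigcup_{\gamma<\mu}C_{\gamma}$ at limit stages to push down past any limit ordinal. You are in fact slightly more careful than the paper, which also tacitly assumes this continuity (not stated in the lemma's hypotheses) and silently ignores the possibility that the minimal index is $0$; your explicit flagging of both points is a genuine improvement in rigor but not a different proof.
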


\begin{proof}
If $x\in \underset{\alpha <\lambda }{\cup }C_{\alpha },$ then $x\in C_{\beta
},$for at least one $\beta <\lambda .$ If $\beta $ is a limit ordinal, then
the set $\left\{ \beta \leq \lambda ,\text{ }\beta \text{ limit ordinal}\mid
x\in C_{\beta }\right\} $ has a minimal element, $\beta _{0}.$ Since $%
C_{\beta _{0}}=\underset{\delta <\beta _{0}}{\cup }C_{\delta }$, there
exists $\delta _{0}<\beta _{0}$ such that $x$ $\in C_{\delta _{0}}.$ By the
choice of $\beta _{0},$ $\delta _{0}$ is successor.
\end{proof}

\bigskip

We now prove the iterated centralizers in transfinite form also compose an
ascending chain.

\begin{lemma}
Let $\alpha $ be an ordinal number. Then $C_{G}^{\alpha }\left( H\right)
\leq C_{G}^{\lambda }\left( H\right) $ for $\lambda \geq \alpha .$
\end{lemma}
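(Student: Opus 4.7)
I would prove this by transfinite induction on $\lambda$, establishing $C_G^\alpha(H)\leq C_G^\lambda(H)$ for every $\alpha\leq\lambda$. The case $\lambda=0$ is trivial, and the limit case is immediate from Definition \ref{3.1.1}(ii): for $\alpha<\lambda$ one has $C_G^\alpha(H)\subseteq\bigcup_{\beta<\lambda}C_G^\beta(H)=C_G^\lambda(H)$. The substantive case is therefore the successor case $\lambda=\mu+1$, where the inductive hypothesis reduces the problem to the single inclusion $C_G^\mu(H)\leq C_G^{\mu+1}(H)$.

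To verify this inclusion, I would fix $x\in C_G^\mu(H)$ and check the two clauses in Definition \ref{3.1.1}(i) defining membership in $C_G^{\mu+1}(H)$: namely, that $[x,H]\subseteq C_G^\mu(H)$ and that $x$ normalizes $C_G^\beta(H)$ for each $\beta\leq\mu$. The commutator clause splits on the nature of $\mu$. If $\mu=\nu+1$, the definition of $C_G^\mu(H)$ directly gives $[x,H]\subseteq C_G^\nu(H)$, which lies inside $C_G^\mu(H)$ by the inductive hypothesis. If $\mu$ is a limit, I would invoke Lemma \ref{3.1.4} to produce a successor $\gamma+1<\mu$ with $x\in C_G^{\gamma+1}(H)$; this yields $[x,H]\subseteq C_G^\gamma(H)\subseteq C_G^\mu(H)$, again by the inductive hypothesis.

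For the normalizer clause, those $\beta$'s lying below the witnessing successor are handled automatically by the defining condition of the iterated centralizer containing $x$. For the remaining values of $\beta$ up to and including $\mu$, the inductive hypothesis places $x$ inside $C_G^\beta(H)$ itself, so the normalizer condition reduces to the fact that $C_G^\beta(H)$ is a subgroup of $G$. This last fact I would record via a brief parallel transfinite induction: the commutator identity $[xy,h]=[x,h]^y[y,h]$ together with the normalizer condition built into the definition gives closure under products in the successor step, while an ascending union of subgroups is a subgroup at limit stages.

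The main obstacle is precisely the limit subcase, where the commutator and normalizer clauses must be verified uniformly for cofinally many $\beta<\mu$. Lemma \ref{3.1.4} is the essential tool that collapses this task to a single successor witness; without it the bookkeeping at the limit step would not terminate. Beyond transfinite induction, Lemma \ref{3.1.4}, and the commutator identity, no additional machinery is required.
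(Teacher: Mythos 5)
Your proposal is correct and follows essentially the same route as the paper: transfinite induction on $\lambda$, reduction of the successor case to $C_{G}^{\mu}(H)\leq C_{G}^{\mu+1}(H)$, a split on whether $\mu$ is a successor or a limit, and an appeal to Lemma \ref{3.1.4} in the limit subcase to obtain a successor witness, with the normalizer clause discharged by the fact that a subgroup normalizes itself. The only difference is that you explicitly flag the need to verify that each $C_{G}^{\beta}(H)$ is a subgroup (which the paper uses tacitly when writing $C_{G}^{\beta}(H)\leq N_{G}(C_{G}^{\beta}(H))$); that is a welcome extra precaution but not a change of method.
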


\begin{proof}
We will argue by transfinite induction. If $\lambda =\alpha ,$ our claim is
trivial. Now suppose that $\lambda $ be $\beta +1$ is successor ordinal.
Then there are two cases:

\begin{description}
\item[\textbf{Case 1}] $\beta $ is a successor ordinal

It will be sufficient to show that $C_{G}^{\beta }\left( H\right) \leq
C_{G}^{\beta +1}\left( H\right) $ since the claim holds for all ordinals
smaller than $\beta .$ By using the Definition \ref{3.1.1} 
\begin{eqnarray*}
C_{G}^{\beta +1}\left( H\right) &=&\left\{ x\in \underset{\alpha <\beta +1}{%
\cap }N_{G}\left( C_{G}^{\alpha }\left( H\right) \right) \mid \left[ x,H%
\right] \subseteq C_{G}^{\beta }\left( H\right) \right\} , \\
C_{G}^{\beta }\left( H\right) &=&\left\{ x\in \underset{\alpha <\beta }{\cap 
}N_{G}\left( C_{G}^{\alpha }\left( H\right) \right) \mid \left[ x,H\right]
\subseteq C_{G}^{\beta -1}\left( H\right) \right\}
\end{eqnarray*}%
Let $x\in C_{G}^{\beta }\left( H\right) $. Then $x\in N_{G}\left(
C_{G}^{\alpha }\left( H\right) \right) $ for all $\alpha $ such that $\alpha
<\beta $. Since $C_{G}^{\beta }\left( H\right) \leq $ $N_{G}\left(
C_{G}^{\beta }\left( H\right) \right) $, $x\in N_{G}\left( C_{G}^{\alpha
}\left( H\right) \right) $. Hence, $x\in \underset{\alpha <\beta +1}{\cap }%
N_{G}\left( C_{G}^{\alpha }\left( H\right) \right) .$ On the other hand,
considering our claim holds for all ordinals smaller than $\beta ,$ 
\begin{equation*}
\left[ x,H\right] \subseteq C_{G}^{\beta -1}\left( H\right) \subseteq
C_{G}^{\beta }\left( H\right)
\end{equation*}%
is obtained for $x\in C_{G}^{\beta }\left( H\right) .$ That conclusion shows
that $C_{G}^{\beta }\left( H\right) \leq C_{G}^{\beta +1}\left( H\right) .$

\item[Case 2] $\beta $ is a limit ordinal.

By Definition \ref{3.1.1}, $C_{G}^{\beta }\left( H\right) =\underset{\alpha
<\beta }{\cup }C_{G}^{\alpha }\left( H\right) $. If $x\in C_{G}^{\beta
}\left( H\right) ,$ then $x$ is an element of $C_{G}^{\gamma }\left(
H\right) $ at least for one element such that $\gamma \leq \beta .$ By Lemma%
\textbf{\ }\ref{3.1.4}, there is a successor ordinal $\gamma $ such that $%
\gamma <\lambda $ and $x\in C_{G}^{\gamma }\left( H\right) $ at least for
one $\gamma \leq \beta $ while $x\in C_{G}^{\beta }\left( H\right) $.
Moreover, by induction $C_{G}^{\gamma }\left( H\right) \leq C_{G}^{\beta
}\left( H\right) $ for $\gamma \leq \beta .$ Then from the definition of $%
C_{G}^{\gamma }\left( H\right) $ it can be written that $x\in N_{G}\left(
C_{G}^{\delta }\left( H\right) \right) $ for all $\delta ,$ such that $%
\delta <\gamma .$ By induction, if $\gamma \leq \delta <\lambda =\beta +1,$
then 
\begin{equation*}
x\in C_{G}^{\gamma }\left( H\right) \leq C_{G}^{\delta }\left( H\right) \leq
N_{G}\left( C_{G}^{\delta }\left( H\right) \right) \Rightarrow x\in
N_{G}\left( C_{G}^{\delta }\left( H\right) \right) .
\end{equation*}%
It remains to show the commutator condition to verify the inclusion $%
C_{G}^{\beta }\left( H\right) \leq C_{G}^{\beta +1}\left( H\right) $.
Considering Definition \ref{3.1.1}, the induction for $\gamma \leq \beta $
and $x\in C_{G}^{\gamma }\left( H\right) ,$%
\begin{equation*}
\left[ x,H\right] \subseteq C_{G}^{\gamma -1}\left( H\right) \subseteq
C_{G}^{\beta }\left( H\right) \Rightarrow \left[ x,H\right] \subseteq
C_{G}^{\beta }\left( H\right) .
\end{equation*}%
is obtained and our claim holds.
\end{description}

Finally if $\lambda $ is limit ordinal, we have 
\begin{equation*}
C_{G}^{\lambda }\left( H\right) =\underset{\beta <\lambda }{\cup }%
C_{G}^{\beta }\left( H\right) =\underset{\alpha \leq \beta <\lambda }{\cup }%
C_{G}^{\alpha }\left( H\right) \geq C_{G}^{\alpha }\left( H\right)
\end{equation*}%
by Definition \ref{3.1.1}. Then the result follows.
\end{proof}

\bigskip

The following lemma is a transfinite version of Fact \ref{2.6}.

\begin{lemma}
\label{3.1.6}Let $\lambda $ be an ordinal number. Then $C_{E_{\lambda
}(H)}^{\alpha }\left( H\right) =Z_{\alpha }\left( E_{\lambda }(H)\right) $\
for all ordinals such that $\alpha \leq \lambda .$
\end{lemma}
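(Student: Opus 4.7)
The plan is transfinite induction on $\lambda$. The base case $\lambda = 0$ reduces to $C_G^0(H) = 1 = Z_0(G)$, which is immediate from the definitions.

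Suppose $\lambda = \mu + 1$. The inductive hypothesis at $\mu$ gives $C_{E_\mu}^\alpha(H) = Z_\alpha(E_\mu)$ for every $\alpha \leq \mu$, which is precisely the hypothesis needed to apply Lemma \ref{3.1.3} with $A = H$, $B = E_\lambda$ and $C = E_\mu$. Part (ii) of that lemma yields the conclusion for all $\alpha \leq \mu$ together with the useful identity $Z_\mu(E_\lambda) = Z_\mu(E_\mu) \cap E_\lambda$, while part (iii) gives
\[ C_{E_\lambda}^{\mu+1}(H) = C_{E_\mu}^{\mu+1}(H) \cap E_\lambda. \]
What remains is to identify this with $Z_{\mu+1}(E_\lambda)$. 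Expanding both sides through Definition \ref{3.1.1} and using that the inductively known iterated centralizers are already normal in $E_\lambda$ (so the normalizer intersections collapse), both sets take the common shape $\{\, x \in E_\lambda : [x, S] \subseteq Z_\mu(E_\lambda)\,\}$, with $S = H$ for $C_{E_\lambda}^{\mu+1}(H)$ and $S = E_\lambda$ for $Z_{\mu+1}(E_\lambda)$. The inclusion $Z_{\mu+1}(E_\lambda) \subseteq C_{E_\lambda}^{\mu+1}(H)$ is then trivial from $H \subseteq E_\lambda$.

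The reverse inclusion at the successor stage is the only genuine obstacle, and it is the only place where the defining condition of the envelope (Definition \ref{3.1.2}(i)) really enters. Given $x \in C_{E_\lambda}^{\mu+1}(H)$, the condition $[x, H] \subseteq Z_\mu(E_\lambda) \subseteq Z_\mu(E_\mu)$ combined with $x \in E_\lambda \subseteq E_\mu$ puts $x$ into $C_{E_\mu}^{\mu+1}(H)$. For any $y \in E_\lambda = E_{\mu+1}(H)$, the defining clause of the envelope yields $[y, C_{E_\mu}^{\mu+1}(H)] \leq Z_\mu(E_\mu)$, so $[x,y] \in Z_\mu(E_\mu)$; since $[x,y]$ lies in $E_\lambda$, one obtains $[x,y] \in Z_\mu(E_\mu) \cap E_\lambda = Z_\mu(E_\lambda)$, i.e.\ $x \in Z_{\mu+1}(E_\lambda)$, as required.

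For $\lambda$ a limit ordinal, strong induction supplies the full claim at every $\beta < \lambda$. For each $\alpha < \lambda$ I pick a $\beta$ with $\alpha \leq \beta < \lambda$ and apply Lemma \ref{3.1.3}(ii) with $C = E_\beta$ and $B = E_\lambda \subseteq E_\beta$ to conclude $C_{E_\lambda}^\alpha(H) = Z_\alpha(E_\lambda)$. The remaining case $\alpha = \lambda$ then follows from the completeness clauses in Definition \ref{3.1.1}(ii) and in the upper central series:
\[ C_{E_\lambda}^\lambda(H) = \bigcup_{\alpha < \lambda} C_{E_\lambda}^\alpha(H) = \bigcup_{\alpha < \lambda} Z_\alpha(E_\lambda) = Z_\lambda(E_\lambda). \]
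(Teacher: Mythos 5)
Your proof is correct and follows essentially the same route as the paper: transfinite induction on $\lambda$, with Lemma \ref{3.1.3} handling all $\alpha\leq\mu$ at the successor step, the trivial inclusion $Z_{\mu+1}(E_{\mu+1})\leq C_{E_{\mu+1}}^{\mu+1}(H)$ from $H\leq E_{\mu+1}$, and the reverse inclusion extracted from the defining clause of the envelope via $C_{E_{\mu+1}}^{\mu+1}(H)=C_{E_{\mu}}^{\mu+1}(H)\cap E_{\mu+1}$ and $Z_\mu(E_\mu)\cap E_{\mu+1}=Z_\mu(E_{\mu+1})$. The limit case is also handled identically, so there is nothing to add.
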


\begin{proof}
We proceed by transfinite induction on $\lambda $. When $\lambda =0,$ $%
\alpha =0$. So, our claim is trivial for $\lambda =0$.

Let $\lambda $ be a successor ordinal, i.e $\lambda =\beta +1.$ In
particular $\alpha \leq \beta $. It is known that $C_{E_{\beta }}^{\alpha
}\left( H\right) =Z_{\alpha }\left( E_{\beta }\right) $ by induction. So, it
will suffice to show the equality%
\begin{equation*}
C_{E_{\beta +1}}^{\alpha }\left( H\right) =Z_{\alpha }\left( E_{\beta
+1}\right)
\end{equation*}%
for $\alpha =\beta +1.$ Applying Lemma\textbf{\ }\ref{3.1.3}\textbf{\ }%
\textit{(ii)} to the $H\leq E_{\beta +1}\leq E_{\beta }$ triple we have 
\begin{equation*}
C_{E_{\beta +1}}^{\alpha }\left( H\right) =Z_{\alpha }\left( E_{\beta
+1}\right) \trianglelefteq E_{\beta +1}\Rightarrow C_{E_{\beta +1}}^{\alpha
}\left( H\right) \trianglelefteq E_{\beta +1}.
\end{equation*}%
On the other hand from Definition \ref{3.1.1}\textbf{\ }and\ Lemma \ref%
{3.1.3}\textbf{\ }\textit{(ii)} 
\begin{equation*}
\left[ Z_{\beta +1}\left( E_{\beta +1}\right) ,H\right] \leq Z_{\beta
}\left( E_{\beta +1}\right) =C_{E_{\beta +1}}^{\beta }\left( H\right)
\end{equation*}%
is obtained. Now we shall show bidirectional inclusion by using Definition %
\ref{3.1.1}\textbf{\ }and\ Lemma \ref{3.1.3} 
\begin{eqnarray*}
C_{E_{\beta +1}}^{\beta +1}\left( H\right) &=&\left\{ x\in \underset{\gamma
<\beta +1}{\cap }N_{E_{\beta +1}}\left( C_{E_{\beta +1}}^{\gamma }\left(
H\right) \right) \mid \left[ x,H\right] \subseteq C_{E_{\beta +1}}^{\beta
}\left( H\right) \right\} \\
&=&\left\{ x\in \underset{\gamma <\beta +1}{\cap }N_{E_{\beta +1}}\left(
Z_{\gamma }\left( E_{\beta +1}\right) \right) \mid \left[ x,H\right]
\subseteq Z_{\beta }\left( E_{\beta +1}\right) \right\} \\
&=&\left\{ x\in E_{\beta +1}\mid \left[ x,H\right] \subseteq Z_{\beta
}\left( E_{\beta +1}\right) \right\} .
\end{eqnarray*}%
So, we get 
\begin{equation*}
Z_{\beta +1}\left( E_{\beta +1}\right) \leq C_{E_{\beta +1}}^{\beta }\left(
H\right) \leq C_{E_{\beta +1}}^{\beta +1}\left( H\right) .
\end{equation*}%
Thus the inclusion $Z_{\beta +1}\left( E_{\beta +1}\right) \leq C_{E_{\beta
+1}}^{\beta +1}\left( H\right) $ is verified. We now will prove the reverse
inclusion. Considering Lemma \ref{3.1.3}\textbf{\ }\textit{(ii)} \textbf{\ }%
\begin{equation*}
\left[ C_{E_{\beta +1}}^{\beta +1}\left( H\right) ,E_{\beta +1}\right] =%
\left[ C_{E_{\beta }}^{\beta +1}\left( H\right) \cap E_{\beta +1},E_{\beta
+1}\right]
\end{equation*}%
is written. We will show this commutator is in $Z_{\beta }\left( E_{\beta
+1}\right) $ to verify the inclusion $C_{E_{\beta +1}}^{\beta +1}\left(
H\right) \leq Z_{\beta +1}\left( E_{\beta +1}\right) $. Considering the
definition of $E_{\beta +1}$ and Lemma \ref{3.1.3}\textbf{\ \textit{(i)}\ }%
\begin{equation*}
E_{\beta +1}=\left\{ x\in E_{\beta }\mid \left[ x,C_{E_{\beta }}^{\beta
+1}\left( H\right) \right] \leq C_{E_{\beta }}^{\beta }\left( H\right)
\right\} =\left\{ x\in E_{\beta }\mid \left[ x,C_{E_{\beta }}^{\beta
+1}\left( H\right) \right] \leq Z_{\beta }\left( E_{\beta }\right) \right\} .
\end{equation*}%
Since $E_{\beta +1}\leq E_{\beta }$, the commutator of $x\in E_{\beta +1}$
and $C_{E_{\beta }}^{\beta +1}\left( H\right) $ is in $Z_{\beta }\left(
E_{\beta }\right) $. So, by using this fact and Lemma \ref{3.1.3}\textbf{\ }%
\begin{eqnarray*}
\left[ C_{E_{\beta +1}}^{\beta +1}\left( H\right) ,E_{\beta +1}\right] &=&%
\left[ C_{E_{\beta }}^{\beta +1}\left( H\right) \cap E_{\beta +1},E_{\beta
+1}\right] \\
&\leq &Z_{\beta }\left( E_{\beta }\right) \cap E_{\beta +1}=Z_{\beta }\left(
E_{\beta +1}\right) \\
&\Rightarrow &C_{E_{\beta +1}}^{\beta +1}\left( H\right) \leq Z_{\beta
}\left( E_{\beta +1}\right) \leq Z_{\beta +1}\left( E_{\beta +1}\right)
\end{eqnarray*}%
is obtained. So we are done. Then, the claim holds for $\lambda =\beta +1$
successor ordinal.

Finally let $\lambda $ be a limit ordinal. While $\alpha <\lambda ,$ we have 
$C_{E_{\alpha }}^{\beta }\left( H\right) =Z_{\beta }\left( E_{\alpha
}\right) $ for all $\beta \leq \alpha $ from inductive hypothesis. Then we
can apply Lemma \ref{3.1.3}\textbf{\ }to $H\leq $ $E_{\lambda }\leq
E_{\alpha }$ subgroups for $\alpha \leq \lambda $. So, we get%
\begin{equation}
C_{E_{\lambda }}^{\beta }\left( H\right) =Z_{\beta }\left( E_{\lambda
}\right)  \label{C3}
\end{equation}%
for $\beta \leq \alpha <\lambda $. When $\beta =\lambda $, consideringly
Definition \ref{3.1.1}\textbf{,} and the equality\textbf{\ }\ref{C3} the
following result is obtained:%
\begin{equation*}
C_{E_{\lambda }}^{\lambda }\left( H\right) =\underset{\beta <\lambda }{\cup }%
C_{E_{\lambda }}^{\beta }\left( H\right) =\underset{\beta <\lambda }{\cup }%
Z_{\beta }\left( E_{\lambda }\right) =Z_{\lambda }\left( E_{\lambda }\right)
.
\end{equation*}%
Thus, our claim follows for ordinal numbers.
\end{proof}

\bigskip

In the rest of this section, we will prove a special ascendance property of
the $E_{\alpha }$-envelopes.

\begin{lemma}
\label{3.1.12}Let $\alpha $ be an ordinal number. Then $Z_{\alpha }\left(
E_{\alpha }\right) \leq E_{\lambda }$\ for $\alpha \leq \lambda $.
\end{lemma}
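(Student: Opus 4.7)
The plan is to run transfinite induction on $\lambda \geq \alpha$, keeping $\alpha$ fixed. The base case $\lambda = \alpha$ is immediate from $Z_\alpha(E_\alpha) \leq E_\alpha$. At a limit $\lambda$, Definition \ref{3.1.2}(ii) writes $E_\lambda = \bigcap_{\beta < \lambda} E_\beta$; the induction hypothesis gives $Z_\alpha(E_\alpha) \leq E_\beta$ whenever $\alpha \leq \beta < \lambda$, while $Z_\alpha(E_\alpha) \leq E_\alpha \leq E_\beta$ automatically handles the remaining $\beta < \alpha$, so intersecting yields $Z_\alpha(E_\alpha) \leq E_\lambda$.

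The substantive step is the successor case. Assuming $Z_\alpha(E_\alpha) \leq E_\lambda$, the key identity I aim for is
\[
Z_\alpha(E_\alpha) \leq Z_\lambda(E_\lambda) = C_{E_\lambda}^\lambda(H).
\]
To recognise $Z_\alpha(E_\alpha)$ as sitting inside the $\alpha$-hypercenter of the smaller group $E_\lambda$, I apply Lemma \ref{3.1.3}(ii) to the triple $H \leq E_\lambda \leq E_\alpha$; its hypothesis $C_{E_\alpha}^\beta(H) = Z_\beta(E_\alpha)$ for $\beta \leq \alpha$ is exactly Lemma \ref{3.1.6} applied inside $E_\alpha$. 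Specialising the conclusion at $\beta = \alpha$ gives $Z_\alpha(E_\lambda) = Z_\alpha(E_\alpha) \cap E_\lambda$, which simplifies to $Z_\alpha(E_\alpha)$ by the induction hypothesis. Monotonicity of the transfinite upper central series then gives $Z_\alpha(E_\lambda) \leq Z_\lambda(E_\lambda)$, and a second invocation of Lemma \ref{3.1.6}, this time inside $E_\lambda$, identifies $Z_\lambda(E_\lambda)$ with $C_{E_\lambda}^\lambda(H)$.

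With the displayed inclusion in hand the definition of $E_{\lambda+1}$ closes the induction quickly. For any $g \in Z_\alpha(E_\alpha)$ one has $g \in Z_\lambda(E_\lambda)$, and normality of $Z_\lambda(E_\lambda)$ in $E_\lambda$ yields $[g,x] \in Z_\lambda(E_\lambda)$ for every $x \in E_\lambda$. Since $C_{E_\lambda}^{\lambda+1}(H) \subseteq E_\lambda$, we conclude $[g, C_{E_\lambda}^{\lambda+1}(H)] \leq Z_\lambda(E_\lambda) = C_{E_\lambda}^\lambda(H)$, so Definition \ref{3.1.2}(i) places $g$ in $E_{\lambda+1}$.

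The main obstacle I expect is precisely the transition from centrality information computed inside the ambient group $E_\alpha$, where $Z_\alpha(E_\alpha)$ lives natively, to centrality information inside the smaller group $E_\lambda$ needed to invoke the defining clause of $E_{\lambda+1}$: without Lemma \ref{3.1.3} one only knows the set-theoretic inclusion $Z_\alpha(E_\alpha) \leq E_\lambda$ and not that this inclusion is compatible with the hypercentral structure of $E_\lambda$. The double use of Lemma \ref{3.1.6} is exactly what supplies the hypothesis of Lemma \ref{3.1.3} and subsequently identifies the target $Z_\lambda(E_\lambda)$ with the iterated centralizer appearing in Definition \ref{3.1.2}(i).
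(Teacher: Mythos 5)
Your proposal is correct and follows essentially the same route as the paper: transfinite induction on $\lambda$, with the successor step handled by applying Lemma \ref{3.1.3}\ \textit{(ii)} to the triple $H\leq E_{\lambda }\leq E_{\alpha }$ (with its hypothesis supplied by Lemma \ref{3.1.6}) to get $Z_{\alpha }\left( E_{\alpha }\right) =Z_{\alpha }\left( E_{\lambda }\right) \leq Z_{\lambda }\left( E_{\lambda }\right) =C_{E_{\lambda }}^{\lambda }\left( H\right) $, and then normality of the relevant hypercenter to verify the commutator clause of Definition \ref{3.1.2}. The only (harmless) difference is that you bound the commutator using normality of $Z_{\lambda }\left( E_{\lambda }\right) $, whereas the paper bounds it inside $Z_{\alpha }\left( E_{\alpha }\right) $ first; your write-up is in fact slightly more explicit about where Lemma \ref{3.1.6} is invoked.
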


\begin{proof}
For $\alpha =\lambda $, since $Z_{\alpha }\left( E_{\alpha }\right) \leq
E_{\alpha }$, our claim is clear. Suppose that $\alpha <\lambda $. Let $%
\lambda $ be a successor ordinal, i.e $\lambda =\beta +1$. By induction, for 
$\alpha \leq \beta $ we have $Z_{\alpha }\left( E_{\alpha }\right) \leq
E_{\beta }$. It is known that $E_{\beta }\leq E_{\alpha }$ for $\alpha \leq
\beta $ by Definition \ref{3.1.2}\textbf{.} Applying Lemma\textbf{\ }\ref%
{3.1.3}\textbf{\ }\textit{(ii)} to the $H\leq E_{\beta }\leq E_{\alpha }$
groups, \textbf{\ }%
\begin{equation*}
Z_{\alpha }\left( E_{\beta }\right) =Z_{\alpha }\left( E_{\alpha }\right)
\cap E_{\beta }
\end{equation*}%
is obtained. By using the induction hypothesis we have 
\begin{equation*}
Z_{\alpha }\left( E_{\beta }\right) =Z_{\alpha }\left( E_{\alpha }\right)
\cap E_{\beta }=Z_{\alpha }\left( E_{\alpha }\right) .
\end{equation*}%
On the other hand considering the facts that; $Z_{\alpha }\left( E_{\beta
}\right) \leq Z_{\beta }\left( E_{\beta }\right) $ for $\alpha \leq \beta $
and $Z_{\beta }\left( E_{\beta }\right) \trianglelefteq E_{\beta }$ we get 
\begin{equation*}
Z_{\alpha }\left( E_{\alpha }\right) =Z_{\alpha }\left( E_{\alpha }\right)
\cap E_{\beta }=Z_{\alpha }\left( E_{\beta }\right) \leq Z_{\beta }\left(
E_{\beta }\right) .
\end{equation*}%
It remains to show that 
\begin{equation*}
\left[ Z_{\alpha }\left( E_{\alpha }\left( H\right) \right) ,C_{E_{\beta
}(H)}^{\beta +1}\left( H\right) \right] \leq Z_{\beta }\left( E_{\beta
}\left( H\right) \right)
\end{equation*}%
to prove the claim for successor ordinals. Since $C_{E_{\beta }(H)}^{\beta
+1}\left( H\right) \leq E_{\beta }\left( H\right) $ and $Z_{\alpha }\left(
E_{\alpha }\right) \leq E_{\beta }$ by induction, we find the following
inclusion 
\begin{equation*}
\left[ Z_{\alpha }\left( E_{\alpha }\right) ,C_{E_{\beta }}^{\beta +1}\left(
H\right) \right] \subseteq Z_{\alpha }\left( E_{\alpha }\right) \subseteq
Z_{\beta }\left( E_{\beta }\right) .
\end{equation*}%
According to this $Z_{\alpha }\left( E_{\alpha }\right) \leq E_{\beta +1}.$
Then our claim holds for $\lambda =\beta +1$ successor ordinal.

For $\lambda $ limit ordinal, let $Z_{\alpha }\left( E_{\alpha }\right) $ be
a subgroup of $E_{\beta }$ for $\alpha \leq \beta <\lambda $. From
Definition \ref{3.1.2}, 
\begin{equation*}
E_{\lambda }=\underset{\beta <\lambda }{\cap }E_{\beta }=\underset{\alpha
\leq \beta <\lambda }{\cap }E_{\beta }\geq Z_{\alpha }\left( E_{\alpha
}\right)
\end{equation*}%
is written. Thus, the result follows from the first and last terms.
\end{proof}

\bigskip

\begin{corollary}
\label{3.1.13}Let $\alpha $ be an ordinal number. Then 
\begin{equation*}
Z_{\alpha }\left( E_{\alpha }\right) \leq Z_{\lambda }\left( E_{\lambda
}\right)
\end{equation*}%
for $\alpha \leq \lambda $.
\end{corollary}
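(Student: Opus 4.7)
The plan is to combine Lemma \ref{3.1.12} with the intersection formula from Lemma \ref{3.1.3}(ii), then use the monotonicity of the upper central series. Specifically, since the cases $\alpha = \lambda$ and $\lambda$ a limit ordinal are immediate (the first from $Z_\alpha(E_\alpha) \leq E_\alpha$, the second by intersecting over $\beta$ with $\alpha \leq \beta < \lambda$), the content lies in handling $\alpha < \lambda$ for a successor ordinal $\lambda$, or more uniformly, in reducing to Lemma \ref{3.1.6}.

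First, I would invoke Lemma \ref{3.1.12} to place $Z_\alpha(E_\alpha)$ inside $E_\lambda$; this is the key inclusion that lets the relative hypercenters inside $E_\lambda$ see $Z_\alpha(E_\alpha)$ at all. Next, I would verify that the triple $H \leq E_\lambda \leq E_\alpha$ satisfies the hypothesis of Lemma \ref{3.1.3}, namely that $C_{E_\alpha}^\beta(H) = Z_\beta(E_\alpha)$ for all $\beta \leq \alpha$; this is exactly the content of Lemma \ref{3.1.6}. Applying Lemma \ref{3.1.3}(ii) to this triple then yields
\begin{equation*}
Z_\alpha(E_\lambda) = Z_\alpha(E_\alpha) \cap E_\lambda .
\end{equation*}
Combined with $Z_\alpha(E_\alpha) \leq E_\lambda$ from the previous step, this forces
\begin{equation*}
Z_\alpha(E_\alpha) = Z_\alpha(E_\alpha) \cap E_\lambda = Z_\alpha(E_\lambda) .
\end{equation*}

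Finally, since $\alpha \leq \lambda$, the standard fact that the transfinite upper central series of any group ascends gives $Z_\alpha(E_\lambda) \leq Z_\lambda(E_\lambda)$, and the chain of equalities and inclusions closes the proof. I do not expect any genuine obstacle: all the hard transfinite bookkeeping has already been done in Lemmas \ref{3.1.3}, \ref{3.1.6}, and \ref{3.1.12}, and this corollary is essentially the packaging step that converts the ambient containment $Z_\alpha(E_\alpha) \leq E_\lambda$ into an internal one inside the hypercenter of $E_\lambda$. The only point that merits a sentence of care is checking that Lemma \ref{3.1.3}'s hypothesis really applies at level $\alpha$ for the triple $H \leq E_\lambda \leq E_\alpha$, which is immediate from Lemma \ref{3.1.6}.
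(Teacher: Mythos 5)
Your proposal is correct and follows essentially the same route as the paper: both use Lemma \ref{3.1.12} to get $Z_{\alpha}\left( E_{\alpha }\right) \leq E_{\lambda }$, apply Lemma \ref{3.1.3} \textit{(ii)} to the triple $H\leq E_{\lambda }\leq E_{\alpha }$ to obtain $Z_{\alpha }\left( E_{\lambda }\right) =Z_{\alpha }\left( E_{\alpha }\right) \cap E_{\lambda }=Z_{\alpha }\left( E_{\alpha }\right) $, and conclude by the monotonicity of the upper central series. Your explicit check that Lemma \ref{3.1.6} supplies the hypothesis of Lemma \ref{3.1.3} is a point the paper leaves implicit, but it is the same argument.
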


\begin{proof}
When $\alpha \leq \lambda $, $E_{\lambda }\leq E_{\alpha }$. By Lemma\textbf{%
\ }\ref{3.1.3}\textbf{\ }and \ref{3.1.12} we have 
\begin{equation*}
Z_{\lambda }\left( E_{\lambda }\right) \geq Z_{\alpha }\left( E_{\lambda
}\right) =Z_{\alpha }\left( E_{\alpha }\right) \cap E_{\lambda }=Z_{\alpha
}\left( E_{\alpha }\right) .
\end{equation*}%
So, we are done.
\end{proof}

\bigskip

\section{Hypercentral Subgroups}

In this section, we apply the technical tools developed in the previous
sections to the analysis of the $E_{\alpha }$-envelopes of hypercentral
subgroups of various classes of groups. This allows us to draw conclusions
on hypercentral subgroups of $\mathfrak{M}_{c}$-groups (Corollary \ref{3.1.9}%
) and prove a general finiteness result (Theorem \ref{3.1.11}) which is the
main conclusion of the paper.

\bigskip

\begin{proposition}
\label{3.1.8}Let $G$ be a group and $H\leq G.$ Let $\alpha $ be an ordinal.
Then;

\begin{enumerate}
\item[(i)] If $H$ is an $\left( \alpha +1\right) $-hypercentral subgroup, $%
E_{\alpha +1}\left( H\right) $ is also $\left( \alpha +1\right) $%
-hypercentral.

\item[(ii)] If $H$ is an $\alpha $-hypercentral subgroup, then $E_{\alpha
+1}\left( H\right) $ is at most $\alpha $-hypercentral.
\end{enumerate}
\end{proposition}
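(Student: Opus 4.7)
The plan is to combine two transfinite ingredients. The first is a general auxiliary fact, which I would isolate as a lemma: for any subgroup $H$ of a group $G$ and every ordinal $\beta$, $Z_{\beta}(H) \leq C_{G}^{\beta}(H)$. I would prove this by transfinite induction on $\beta$, running in parallel with the companion induction that $H$ itself normalises every iterated centraliser $C_{G}^{\gamma}(H)$; this companion statement is needed to verify the normaliser condition in the definition of $C_{G}^{\beta+1}$. The normalisation step is handled using the commutator identity $[g^{h}, h'] = [g, (h')^{h^{-1}}]^{h}$ to transport the commutator condition through conjugation; limit ordinals are routine, since both $Z_{\beta}$ and $C_{G}^{\beta}$ are defined as unions of their predecessors. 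Specialising to $G = E_{\alpha}(H)$ and invoking Lemma \ref{3.1.6}, we obtain the usable form $Z_{\beta}(H) \leq Z_{\beta}(E_{\alpha}(H))$ whenever $\beta \leq \alpha$.

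For part (i), assume $H = Z_{\alpha+1}(H)$, so that $H \leq C_{E_{\alpha}(H)}^{\alpha+1}(H)$ by the auxiliary fact. Pick $g \in E_{\alpha+1}(H)$. The defining relation of $E_{\alpha+1}$ gives $[g, C_{E_{\alpha}(H)}^{\alpha+1}(H)] \leq C_{E_{\alpha}(H)}^{\alpha}(H) = Z_{\alpha}(E_{\alpha}(H))$ by Lemma \ref{3.1.6}, so in particular $[g, H] \leq Z_{\alpha}(E_{\alpha}(H))$. The normaliser condition needed to place $g$ in $C_{E_{\alpha}(H)}^{\alpha+1}(H)$ is automatic because each $Z_{\beta}(E_{\alpha}(H))$ is characteristic. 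Hence $g \in C_{E_{\alpha}(H)}^{\alpha+1}(H)$. Applying Lemma \ref{3.1.3}(iii) to the triple $H \leq E_{\alpha+1}(H) \leq E_{\alpha}(H)$ with $\lambda = \alpha$ (its hypothesis being supplied by Lemma \ref{3.1.6}), we conclude $g \in C_{E_{\alpha+1}(H)}^{\alpha+1}(H) = Z_{\alpha+1}(E_{\alpha+1}(H))$, the last equality being another application of Lemma \ref{3.1.6}. The reverse inclusion is automatic, so $E_{\alpha+1}(H) = Z_{\alpha+1}(E_{\alpha+1}(H))$.

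For part (ii), I would execute the same scheme one level lower. From $H = Z_{\alpha}(H) \leq Z_{\alpha}(E_{\alpha}(H))$ and the normality of $Z_{\alpha}(E_{\alpha}(H))$ in $E_{\alpha}(H)$, one sees that $[x, H] \leq Z_{\alpha}(E_{\alpha}(H))$ for every $x \in E_{\alpha}(H)$, which collapses $C_{E_{\alpha}(H)}^{\alpha+1}(H)$ to all of $E_{\alpha}(H)$. The defining condition of $E_{\alpha+1}(H)$ then reduces to $[g, E_{\alpha}(H)] \leq Z_{\alpha}(E_{\alpha}(H))$, so $E_{\alpha+1}(H) = Z_{\alpha+1}(E_{\alpha}(H))$. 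When $\alpha = \beta + 1$ is a successor, part (i) applied at level $\beta$ gives $E_{\alpha}(H) = Z_{\alpha}(E_{\alpha}(H))$, whence $E_{\alpha+1}(H) = E_{\alpha}(H)$ is $\alpha$-hypercentral; for general $\alpha$, closing under subgroups (via the lower central series) yields the conclusion from the inclusion $E_{\alpha+1}(H) \leq E_{\alpha}(H)$. The main obstacle throughout is the transfinite normalisation step in the auxiliary lemma, together with the matching for limit $\alpha$ in (ii), where no step-down to $\beta < \alpha$ is directly available and one must instead exploit the continuity clauses in Definition \ref{3.1.1} and in the upper central series.
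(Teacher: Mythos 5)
Your auxiliary lemma $Z_{\beta}(H)\leq C_{G}^{\beta}(H)$ is true, and the parallel induction with the normalisation claim (via $[x^{h},h']=[x,(h')^{h^{-1}}]^{h}$) goes through; the paper obtains the same containment more directly by applying Lemma \ref{3.1.3} \textit{(ii)} to the triple $H\leq H\leq E_{\alpha}$, which gives $Z_{\beta}(H)=Z_{\beta}(E_{\alpha})\cap H$ once Lemma \ref{3.1.6} supplies its hypothesis. Your part (i) is correct and is in substance the paper's argument with the quotient removed: the paper identifies $E_{\alpha +1}/Z_{\alpha}(E_{\alpha})$ as the double centraliser of the abelian group $HZ_{\alpha}(E_{\alpha})/Z_{\alpha}(E_{\alpha})$ and invokes Fact \ref{3.1}, while you unpack this as $H\leq C_{E_{\alpha}}^{\alpha +1}(H)$, hence $E_{\alpha +1}\leq C_{E_{\alpha}}^{\alpha +1}(H)$, hence $E_{\alpha +1}=C_{E_{\alpha +1}}^{\alpha +1}(H)=Z_{\alpha +1}(E_{\alpha +1})$ by Lemma \ref{3.1.3} \textit{(iii)} and Lemma \ref{3.1.6}. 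You should add the one-line observation that the degree cannot drop below $\alpha +1$ because $E_{\alpha +1}$ contains $H$; that is what ``is also $(\alpha +1)$-hypercentral'' asserts.

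The genuine gap is in part (ii) at limit ordinals. Your computation correctly collapses $C_{E_{\alpha}}^{\alpha +1}(H)$ to all of $E_{\alpha}$ and yields $E_{\alpha +1}=Z_{\alpha +1}(E_{\alpha})$, but that only says $E_{\alpha +1}$ is at most $(\alpha +1)$-hypercentral, whereas the proposition demands $Z_{\alpha}(E_{\alpha +1})=E_{\alpha +1}$. Your proposed upgrade --- closing under subgroups from the inclusion $E_{\alpha +1}\leq E_{\alpha}$ --- does not work, because $E_{\alpha}$ is not known to be hypercentral of degree $\alpha$ (or of any degree): for limit $\alpha$ you cannot apply part (i) at any level $\beta <\alpha$, since $H$ need not be $\beta$-hypercentral for any $\beta <\alpha$. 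Concretely, from the abelianness of $E_{\alpha +1}/Z_{\alpha}(E_{\alpha +1})$ you only learn that each commutator $[x,y]$ with $x,y\in E_{\alpha +1}$ lies in some $Z_{\beta}(E_{\alpha +1})$ with $\beta <\alpha$, with no uniform bound on $\beta$, so nothing forces $x$ itself into $Z_{\beta +1}(E_{\alpha +1})$ for a single $\beta <\alpha$. This missing step --- passing from $Z_{\alpha +1}(E_{\alpha +1})=E_{\alpha +1}$ to $Z_{\alpha}(E_{\alpha +1})=E_{\alpha +1}$ --- is precisely what the paper's equations \ref{C4} and \ref{C5} are devoted to; your proposal names it as ``the main obstacle'' but does not overcome it.
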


\begin{proof}

\begin{enumerate}
\item[(i)] Let $H$ be an $\left( \alpha +1\right) $-hypercentral subgroup.
By using the second isomorphism theorem and Lemma \ref{3.1.3}\textbf{\ }%
\textit{(ii)}\textbf{\ }for the triple $H\leq $ $H\leq E_{\alpha }$ 
\begin{equation*}
HZ_{\alpha }\left( E_{\alpha }\right) \diagup Z_{\alpha }\left( E_{\alpha
}\right) \cong H\diagup H\cap Z_{\alpha }\left( E_{\alpha }\right) =H\diagup
Z_{\alpha }\left( H\right)
\end{equation*}%
is written. Since $H$ is $\left( \alpha +1\right) $-hypercentral, $H\diagup
Z_{\alpha }\left( H\right) $ is abelian. By the second isomorphism theorem,
Lemma \ref{3.1.12} and applying Lemma \ref{3.1.3} \textit{(ii)} to $H\leq
E_{\alpha +1}\left( H\right) \leq E_{\alpha }\left( H\right) $ triple 
\begin{equation*}
E_{\alpha +1}\diagup Z_{\alpha }\left( E_{\alpha }\right) \cong E_{\alpha
+1}\diagup E_{\alpha +1}\cap Z_{\alpha }\left( E_{\alpha }\right) =E_{\alpha
+1}\diagup Z_{\alpha }\left( E_{\alpha +1}\right)
\end{equation*}%
is obtained. On the other hand, by the definition of $E_{\alpha +1}$ and $%
C_{E_{\alpha }}^{\alpha +1}\left( H\right) $, we have 
\begin{equation*}
E_{\alpha +1}\diagup Z_{\alpha }\left( E_{\alpha }\right) =C_{E_{\alpha
}\diagup Z_{\alpha }\left( E_{\alpha }\right) }\left( C_{E_{\alpha }\diagup
Z_{\alpha }\left( E_{\alpha }\right) }\left( HZ_{\alpha }\left( E_{\alpha
}\right) \diagup Z_{\alpha }\left( E_{\alpha }\right) \right) \right) .
\end{equation*}%
Since $H\diagup Z_{\alpha }\left( H\right) $ is abelian and $HZ_{\alpha
}\left( E_{\alpha }\right) \diagup Z_{\alpha }\left( E_{\alpha }\right)
\cong H\diagup Z_{\alpha }\left( H\right) $, the following group 
\begin{equation*}
E_{\alpha +1}\diagup Z_{\alpha }\left( E_{\alpha }\right) \cong E_{\alpha
+1}\diagup Z_{\alpha }\left( E_{\alpha +1}\right) .
\end{equation*}%
is also abelian from Fact \ref{3.1}\textbf{. }Thus the subgroup $E_{\alpha
+1}$ is at most $\left( \alpha +1\right) $-hypercentral. But at the same
time $E_{\alpha +1}$ is exactly $\left( \alpha +1\right) $-hypercentral
since the hypercentrality class of a group can not be smaller than the
hypercentrality class of subgroup.

\item[(ii)] When $\alpha $ is a limit ordinal $H\diagup Z_{\alpha }\left(
H\right) =1\ $since\ $Z_{\alpha }\left( H\right) =H.$ So, we have 
\begin{equation*}
HZ_{\alpha }\left( E_{\alpha }\right) \diagup Z_{\alpha }\left( E_{\alpha
}\right) \cong H\diagup Z_{\alpha }\left( H\right) =1.
\end{equation*}%
In addition to (i), by using Definitions \ref{3.1.1}\textbf{, }\ref{3.1.2}
and Lemma \ref{3.1.12} the following abelian group is obtained: 
\begin{eqnarray*}
E_{\alpha +1}\diagup Z_{\alpha }\left( E_{\alpha +1}\right) &\cong
&E_{\alpha +1}\diagup Z_{\alpha }\left( E_{\alpha }\right) \\
&=&C_{E_{\alpha }\diagup Z_{\alpha }\left( E_{\alpha }\right) }\left(
C_{E_{\alpha }\diagup Z_{\alpha }\left( E_{\alpha }\right) }\left(
HZ_{\alpha }\left( E_{\alpha }\right) \diagup Z_{\alpha }\left( E_{\alpha
}\right) \right) \right) \\
&=&C_{E_{\alpha }\diagup Z_{\alpha }\left( E_{\alpha }\right) }\left(
C_{E_{\alpha }\diagup Z_{\alpha }\left( E_{\alpha }\right) }\left( 1\right)
\right) =Z\left( E_{\alpha }\diagup Z_{\alpha }\left( E_{\alpha }\right)
\right) .
\end{eqnarray*}%
Then, $E_{\alpha +1}$ is at most $\left( \alpha +1\right) $-hypercentral
subgroup since $E_{\alpha +1}\diagup Z_{\alpha }\left( E_{\alpha +1}\right) $
is abelian. Besides applying\textbf{\ }Lemma \ref{3.1.3} \textit{(ii)}%
\textbf{\ }respectively to the subgroups $H\leq $ $H\leq E_{\beta }$ and $%
H\leq $ $H\leq E_{\beta +1}$ 
\begin{eqnarray*}
Z_{\beta }\left( H\right) &=&Z_{\beta }\left( E_{\beta }\right) \cap H, \\
Z_{\beta +1}\left( H\right) &=&Z_{\beta +1}\left( E_{\beta +1}\right) \cap H,
\end{eqnarray*}%
is written for $\beta <\alpha $. When $\beta =\alpha $, from the hypothesis
of proposition and the fact that $E_{\alpha +1}$ is at most $\left( \alpha
+1\right) $-hypercentral, we have 
\begin{equation}
Z_{\alpha }\left( H\right) =Z_{\alpha +1}\left( H\right) \Rightarrow
Z_{\alpha }\left( E_{\alpha }\right) =Z_{\alpha +1}\left( E_{\alpha
+1}\right) =E_{\alpha +1}.  \label{C4}
\end{equation}%
Considering the equality \ref{C4}, we get \textbf{\ } 
\begin{equation}
Z_{\alpha }\left( E_{\alpha +1}\right) =Z_{\alpha }\left( E_{\alpha }\right)
\cap E_{\alpha +1}=Z_{\alpha }\left( E_{\alpha }\right) =Z_{\alpha +1}\left(
E_{\alpha +1}\right) =E_{\alpha +1}.  \label{C5}
\end{equation}%
Here Lemma \ref{3.1.3}\textbf{\ }\textit{(ii)} was applied to $H\leq
E_{\alpha +1}\leq E_{\alpha }$ subgroups for the first equation, while Lemma %
\ref{3.1.12} and the fact that at most $\left( \alpha +1\right) $%
-hypercentrality of $E_{\alpha +1}\left( H\right) $ were used respectively
for the second and third equations. So $Z_{\alpha }\left( E_{\alpha
+1}\right) =E_{\alpha +1}$.
\end{enumerate}
\end{proof}

\bigskip

We now prove a corollary of Proposition \ref{3.1.8} for an $\mathfrak{M}_{c}$%
-group.

\begin{corollary}
\label{3.1.9}Let $G$ be an $\mathfrak{M}_{c}$-group and $H$ be an $\alpha $%
-hypercentral subgroup of $G$. Then $E_{\alpha +1}\left( H\right) $ is
solvable.
\end{corollary}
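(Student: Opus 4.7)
The plan is to reduce the statement to Fact \ref{3.3} via Proposition \ref{3.1.8}(ii). First, since $H$ is $\alpha$-hypercentral, Proposition \ref{3.1.8}(ii) gives that $E_{\alpha+1}(H)$ is at most $\alpha$-hypercentral; in particular, $E_{\alpha+1}(H)$ is a hypercentral group (its own upper central series reaches the whole group in at most $\alpha$ steps).

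Next, I would invoke the classical fact that every hypercentral group is locally nilpotent: given a finitely generated subgroup, each of its generators lies in some $Z_{\beta_i}(E_{\alpha+1}(H))$ with $\beta_i$ a successor ordinal (using the completeness clause at limits to move inward), and a straightforward commutator argument then shows the finitely generated subgroup is nilpotent. Thus $E_{\alpha+1}(H)$ is locally nilpotent.

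It remains to check that $E_{\alpha+1}(H)$ itself belongs to $\mathfrak{M}_c$. This is immediate because the class $\mathfrak{M}_c$ is closed under passage to subgroups: for any subset $A$ of a subgroup $K\leq G$, one has $C_K(A)=C_G(A)\cap K$, so any descending chain of centralizers in $K$ induces one in $G$. Hence $E_{\alpha+1}(H)\leq G$ is a locally nilpotent $\mathfrak{M}_c$-group, and Fact \ref{3.3} delivers solvability.

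The main obstacle here is invoking the implication \textit{hypercentral $\Rightarrow$ locally nilpotent}; this is a standard but non-trivial result (it follows from the normalizer condition for hypercentral groups, or from a direct transfinite induction on the class). Everything else is a routine composition of already-established facts, so if the author treats this implication as folklore the proof becomes essentially a two-line chain: Proposition \ref{3.1.8}(ii) gives hypercentrality of $E_{\alpha+1}(H)$, hence local nilpotence, and Fact \ref{3.3} then gives solvability.
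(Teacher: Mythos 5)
Your proposal is correct and follows exactly the same route as the paper's own proof: Proposition \ref{3.1.8}(ii) gives hypercentrality of $E_{\alpha+1}(H)$, hypercentral groups are locally nilpotent, $\mathfrak{M}_c$ is closed under subgroups, and Fact \ref{3.3} yields solvability. The only difference is that you spell out the folklore steps (hypercentral $\Rightarrow$ locally nilpotent, and $C_K(A)=C_G(A)\cap K$) which the paper simply asserts.
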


\begin{proof}
B\textbf{y }Proposition \ref{3.1.8}\textbf{\ }\textit{(ii)} $E_{\alpha
+1}\left( H\right) $ is hypercentral. Since any hypercentral group is
locally nilpotent and the class $\mathfrak{M}_{c}$\ is closed under the
formation of subgroups, $E_{\alpha +1}\left( H\right) $ is a locally
nilpotent $\mathfrak{M}_{c}$-group. By Fact \ref{3.3} $E_{\alpha +1}\left(
H\right) $ envelope is solvable.
\end{proof}

\bigskip

The conclusion of this corollary is the best possible in this direction.
Indeed, the following example shows that the envelope of an hypercentral
subgroup of an $\mathfrak{M}_{c}$-group be non nilpotent:

\bigskip

\textit{Let }$G$ denote $GL_{2}\left( 
\mathbb{C}
\right) $\textit{\ that has }$\mathfrak{M}_{c}$\textit{-property and the
hypercentral subgroup from the successor ordinal degree of }$G$%
\begin{equation*}
H_{\infty }=\left\langle 
\begin{pmatrix}
\lambda _{1} & 0 \\ 
0 & \lambda _{2}%
\end{pmatrix}%
\mid \lambda _{i}^{2^{n}}=1,\text{ }\lambda _{i}\in 
\mathbb{C}
,\text{\ }i,n\in 
\mathbb{N}
\right\rangle \leq G
\end{equation*}%
\textit{The envelopes of \bigskip }$H_{\infty }$\textit{\ subgroup are
determined as follows:}

\begin{itemize}
\item $E_{0}\left( H_{\infty }\right) =E_{1}\left( H_{\infty }\right) =G,$
\end{itemize}

\bigskip

\begin{itemize}
\item \textit{For }$n\in 
\mathbb{N}
,$ $n\neq 0,1$\textit{; }$E_{n}\left( H_{\infty }\right) =\left\langle 
\begin{pmatrix}
x & 0 \\ 
0 & y%
\end{pmatrix}%
\mid x,y\in 
\mathbb{C}
^{\ast }\right\rangle \rtimes \left\langle 
\begin{pmatrix}
0 & 1 \\ 
1 & 0%
\end{pmatrix}%
\right\rangle ,$
\end{itemize}

\bigskip

\begin{itemize}
\item $E_{\omega }\left( H_{\infty }\right) =\left\langle 
\begin{pmatrix}
x & 0 \\ 
0 & y%
\end{pmatrix}%
\mid x,y\in 
\mathbb{C}
^{\ast }\right\rangle \rtimes \left\langle 
\begin{pmatrix}
0 & 1 \\ 
1 & 0%
\end{pmatrix}%
\right\rangle ,$
\end{itemize}

\bigskip

\begin{itemize}
\item $E_{\omega +1}\left( H_{\infty }\right) =\left\langle 
\begin{pmatrix}
x & 0 \\ 
0 & x1^{\left( \frac{1}{2}\right) ^{n}}%
\end{pmatrix}%
\mid x\in 
\mathbb{C}
^{\ast },n\in 
\mathbb{N}
^{\ast }\right\rangle \rtimes \left\langle 
\begin{pmatrix}
0 & 1 \\ 
1 & 0%
\end{pmatrix}%
\right\rangle ,$
\end{itemize}

\textit{where\ }$1^{\left( \frac{1}{2}\right) ^{n}}$\textit{\ denotes the }$%
2^{n}-th$\textit{\ roots of\ }$1.$ $E_{\omega +1}\left( H_{\infty }\right) $%
\textit{\ is a hypercentral subgroup from }$\left( \omega +1\right) -th$%
\textit{\ degree by Proposition \ref{3.1.8} and so it is locally nilpotent.
At the same time }$E_{\omega +1}\left( H_{\infty }\right) $\textit{\ is }an $%
\mathfrak{M}_{c}$-group\textit{\ since }$G$ is an $\mathfrak{M}_{c}$\textit{%
-group. By Fact \ref{3.3}} $E_{\omega +1}\left( H_{\infty }\right) $\textit{%
\ is solvable. But it is not nilpotent. }

\bigskip

Now we shall prove the main result of the paper, a finiteness condition that
extends Corollary 3.1.2 of \cite{Cakmak}:

\bigskip

\begin{theorem}
\label{3.1.11}Let $\alpha $ be a limit ordinal, $G$ a group and $H$ an $%
\alpha $-hypercentral subgroup of $G$. Then $E_{\alpha +1}\left( H\right)
=E_{\lambda }\left( H\right) $\ for all ordinals $\lambda $\ such that $%
\alpha +1\leq \lambda .$
\end{theorem}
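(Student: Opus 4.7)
The plan is to establish both inclusions of $E_{\alpha+1}(H)=E_\lambda(H)$ for $\lambda\geq\alpha+1$. One inclusion, $E_\lambda\leq E_{\alpha+1}$, is immediate from the fact (recorded in Definition \ref{3.1.2}) that the $E_\lambda(H)$ form a descending sequence, so the work lies in showing $E_{\alpha+1}\leq E_\lambda$ by transfinite induction on $\lambda$.

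The crucial input comes from Proposition \ref{3.1.8}\textit{(ii)}: since $H$ is $\alpha$-hypercentral with $\alpha$ a limit ordinal, $Z_\alpha(H)=H$ and the argument in that proof yields $Z_\alpha(E_{\alpha+1})=E_{\alpha+1}$. Combined with Lemma \ref{3.1.6}, this gives $C_{E_{\alpha+1}}^\alpha(H)=Z_\alpha(E_{\alpha+1})=E_{\alpha+1}$. I would then run a short secondary transfinite induction to show that, once this equality holds, $C_{E_{\alpha+1}}^\gamma(H)=E_{\alpha+1}$ for every $\gamma\geq\alpha$: at a successor step the commutator condition $[x,H]\subseteq E_{\alpha+1}$ and the normalizer conditions $x\in N(E_{\alpha+1})$ are vacuous, and at a limit step Definition \ref{3.1.1}\textit{(ii)} closes the induction.

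The base case $\lambda=\alpha+1$ is trivial. For the successor step $\lambda=\beta+1$ with $\beta\geq\alpha+1$, assume inductively that $E_\beta=E_{\alpha+1}$; then the saturation computation above gives $C_{E_\beta}^\beta(H)=C_{E_\beta}^{\beta+1}(H)=E_\beta$, so the defining condition
\begin{equation*}
[g,C_{E_\beta}^{\beta+1}(H)]\leq C_{E_\beta}^\beta(H)
\end{equation*}
is satisfied by every $g\in E_\beta$, forcing $E_{\beta+1}=E_\beta=E_{\alpha+1}$. For a limit ordinal $\lambda>\alpha+1$, Definition \ref{3.1.2}\textit{(ii)} gives
\begin{equation*}
E_\lambda=\bigcap_{\beta<\lambda}E_\beta;
\end{equation*}
the tail $\bigcap_{\alpha+1\leq\beta<\lambda}E_\beta$ is constantly $E_{\alpha+1}$ by induction, and each $E_\beta$ with $\beta\leq\alpha+1$ contains $E_{\alpha+1}$ since the sequence is descending, so $E_\lambda=E_{\alpha+1}$.

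The main technical obstacle will be the secondary induction showing $C_{E_{\alpha+1}}^\gamma(H)=E_{\alpha+1}$ for all $\gamma\geq\alpha$; the delicate point is the normalizer intersection in Definition \ref{3.1.1}\textit{(i)}, which I would handle by observing in parallel that the saturating iterated centralizer is equal to the whole group (so normalization is automatic), rather than analyzing a chain of distinct normal subgroups. Everything else reduces to unwinding Definitions \ref{3.1.1} and \ref{3.1.2} and invoking Proposition \ref{3.1.8}\textit{(ii)} together with Lemma \ref{3.1.6}.
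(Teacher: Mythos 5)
Your proposal is correct and follows essentially the same route as the paper: transfinite induction on $\lambda$, a trivial base case, a successor step reduced to the observation that once $E_{\beta}=E_{\alpha +1}$ and $Z_{\alpha }\left( E_{\alpha +1}\right) =E_{\alpha +1}$ (Proposition \ref{3.1.8} \textit{(ii)}) the defining commutator condition for $E_{\beta +1}$ is vacuous, and a limit step handled by intersecting an eventually constant descending chain. The only cosmetic difference is that the paper obtains $C_{E_{\beta }}^{\beta }\left( H\right) =Z_{\beta }\left( E_{\beta }\right) =E_{\beta }$ directly from Lemma \ref{3.1.6}, whereas you saturate the iterated centralizers by a secondary transfinite induction; both arguments rest on exactly the same two inputs.
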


\begin{proof}
We proceed by transfinite induction on $\lambda $. For $\lambda =\alpha +1,$
the claim is trivially satisfied. If $\lambda $ is successor ordinal
strictly bigger than $\alpha +1$, i.e $\lambda =\beta +1,$ it is known that
the claim is true for all $\alpha +1\leq \beta $ ordinals by induction%
\textbf{.} So, the following sequence of equalities holds:%
\begin{eqnarray*}
E_{\beta +1} &=&\left\{ x\in E_{\beta }\mid \left[ x,C_{E_{\beta }}^{\beta
+1}\left( H\right) \right] \leq C_{E_{\beta }}^{\beta }\left( H\right)
\right\} \\
&=&\left\{ x\in E_{\beta }\mid \left[ x,C_{E_{\beta }}^{\beta +1}\left(
H\right) \right] \leq Z_{\beta }\left( E_{\beta }\right) \right\} \\
&=&\left\{ x\in E_{\beta }\mid \left[ x,C_{E_{\beta }}^{\beta +1}\left(
H\right) \right] \leq E_{\beta }\right\} =E_{\beta }=E_{\alpha +1},
\end{eqnarray*}%
using also Lemma \ref{3.1.6} and Proposition \ref{3.1.8}\textbf{\ }\textit{%
(ii)}. If $\lambda $ limit is limit ordinal, by using the facts that the $%
E_{\lambda }$ form a descending chain and $E_{\alpha +1}=E_{\beta }$ for all
ordinals such that $\alpha +1\leq \beta ,$ we get 
\begin{equation*}
E_{\lambda }=\underset{\beta <\lambda }{\cap }E_{\beta }=\underset{\alpha
+1\leq \beta <\lambda }{\cap }E_{\alpha +1}=E_{\alpha +1}.
\end{equation*}%
Thus the result follows for all ordinals $\lambda $ such that $\alpha +1\leq
\lambda $.
\end{proof}

\bigskip

\section{Acknowledgements}

The author is deeply grateful to her Ph.D. supervisors Tuna Alt\i nel and
Erdal Karaduman for their valuable comments and suggestions all along the
way. She would also like to acknowledge the warm hospitalty of Universit\'{e}
Claude Bernard Lyon-1 where the results of the article were obtained.

\bigskip

\bigskip


\begin{thebibliography}{9}
\bibitem{A-B} Alt\i nel, Tuna; Baginski, Paul. Definable envelopes of
nilpotent subgroups of groups with chain conditions on centralizers.
Proceedings of the American Mathematical Society, 2014, 142.5: 1497-1506.

\bibitem{Bryant 1} Bryant, Roger M. Groups with the minimal condition on
centralizers. Journal of Algebra, 1979, 60.2: 371-383.

\bibitem{Cakmak} Cakmak, Tuba. On stabilization of $E_{k}$\ definable
envelopes. Accepted for publication in Communications in Algebra.
\end{thebibliography}
\end{document}